\newtheorem{thm}{Theorem}[section]
\newtheorem{prop}[thm]{Proposition}
\newtheorem{lemma}[thm]{Lemma}
\newtheorem{cor}[thm]{Corollary}
\theoremstyle{remark}
\newtheorem{remark}[thm]{Remark}
\newcommand{\id}{{\rm{id}}}
\newcommand{\BN}{\mathbf N}
\newcommand{\BC}{\mathbf C}
\newcommand{\BB}{\mathbf B}
\newcommand{\la}{\langle}
\newcommand{\ra}{\rangle}
\newcommand{\Equi}{{\rm{Equi}}}
\newtheorem{Def}{Definition}[section]
\title{Bimodule maps from a unital $C^*$-algebra to its $C^*$-subalgebra and strong
Morita equivalence}
\author{Kazunori Kodaka}
\address{Department of Mathematical Sciences, Faculty of Science, Ryukyu
\endgraf
University, Nishihara-cho, Okinawa, 903-0213, Japan}
\address{\sl{E-mail address}: \rm{kodaka@math.u-ryukyu.ac.jp}}
\keywords{bimodule maps, inclusions of $C^*$-algebras, modular automorphisms
strong Morita equivalence}
\subjclass[2010]{46L05}
\begin{document}
\begin{abstract}
Let $A\subset C$ and $B\subset D$ be unital inclusions of unital $C^*$-algebras.
Let ${}_A \BB_A (C, A)$ (resp. ${}_B \BB_B (D, B)$) be the space of all bounded $A$-bimodule
(resp. $B$-bimodule) linear maps from $C$ (resp. $D$) to $A$ (resp. $B$). We suppose that
$A\subset C$ and $B\subset D$ are strongly Morita equivalent. We shall show that there is an isometric
isomorphism $f$ of ${}_A \BB_A (C, A)$ onto ${}_B \BB_B (D, B)$ and we shall study on basic properties about
$f$.
\end{abstract}

\maketitle

\section{Introduction}\label{sec:intro} Let $A\subset C$ and $B\subset D$ be unital inclusions of
unital $C^*$-algebras. We suppose that they are strongly Morita equivalent with respect to a
$C-D$-equivalence bimodule $Y$ an its closed subspace $X$.
In this paper, we shall
define an isometric isomorphism $f$ of ${}_A \BB_A (C, A)$ onto ${}_B \BB_B (D, B)$ induced by
$Y$ and $X$, where ${}_A \BB_A (C, A)$
(resp. ${}_B \BB_B (D, B)$) is the space of all bounded $A$-bimodule (resp. $B$-bimodule) linear maps from
$C$ (resp. $D$) to $A$ (resp. $B$). We shall show that the above isometric isomorphism $f$ can be
constructed in the same way as in \cite [Section 2]{KT4:morita}. Using the above result, we study on the basic
properties about $f$. Especially we shall give the following result: If $\phi$ is an element in ${}_A \BB_A (C, A)$
having a quasi-basis defined in Watatani \cite [Definition 1.11.1]{Watatani:index}, then $f(\phi)$ is also an
element in ${}_B \BB_B (D, B)$ having a quasi-basis and there is an isomorphism of $\pi$ of $A' \cap C$ onto
$B' \cap D$ such that
$$
\theta^{f(\phi)}=\pi\circ\theta^{\phi}\circ\pi^{-1} ,
$$
where $\theta^{\phi}$ and $\theta^{f(\phi)}$ are the modular automorphisms for $\phi$ and $f(\phi)$,
respectively which are defined in \cite [Definition 1.11.2]{Watatani:index}.
We note that the isometric isomorphism $f$ of ${}_A \BB_A (C, A)$ onto ${}_B \BB_B (D, B)$ depends on the
choice of a $C-D$-equivalence bimodule and its closed subspace $X$. In the last section, we shall
discuss the relation between $f$ and the pair $(X, Y)$.
\par
For an algebra $A$, we denote by $1_A$ and $\id_A$ the unit element in $A$ and the identity
map on $A$, respectively. If no confusion arises, we denote them by $1$ and $\id$, respectively.
For each $n\in\BN$, we denote by $M_n (\BC)$ the $n\times n$-matrix algebra over $\BC$ and $I_n$
denotes the unit element in $M_n (\BC)$. Also, we denote by $M_n (A)$ the $n\times n$-matrix algebra
over $A$ and we identify $M_n(A)$ with $A\otimes M_n (\BC)$ for any $n\in \BN$.
\par
Let $A$ and $B$ be $C^*$-algebras. Let $X$ be an $A-B$-equivalence bimodule.
For any $a\in A$, $b\in B$, $x\in X$,
we denote by $a\cdot x$ the left $A$-action on $X$ and by $x\cdot b$ the right $B$-action on $X$, respectively.
Let ${}_A \BB(X)$ be the $C^*$-algebra of all adjointable left $A$-linear operators on $X$
and we identify ${}_A \BB(X)$ with $B$. Similarly we define $\BB_B (X)$ and we identify $\BB_B (X)$ with $A$.

\section{Construction}\label{sec:con} Let $A\subset C$ and $B\subset D$ be
unital inclusions of unital $C^*$-algebras.
Let ${}_A \BB_A (C, A)$ and ${}_B \BB_B (D, B)$ be as in Introduction.
We suppose that $A\subset C$ and
$B\subset D$ are strongly Morita equivalent with respect to a $C-D$-equivalence bimodule $Y$ and
its closed subspace $X$. We construct an isometric isomorphism of ${}_A \BB_A (C, A)$ onto ${}_B \BB_B (D, B)$.
Let $\phi\in {}_A \BB_A (C, A)$. In the same way as in the proof of \cite [Lemma 3.4]{KT4:morita}, we define the linear
map $\tau$ from $Y$ to $X$ by
$$
{}_A \la \tau(y) \, , \, x \ra =\phi( \, {}_C \la y, x \ra)
$$
for any $x\in X$, $y\in Y$.

\begin{lemma}\label{lem:con1} With the above notation, $\tau$ satisfies the following conditions:
\newline
$(1)$ $\tau(c\cdot x)=\phi(c)\cdot x$,
\newline
$(2)$ $\tau(a\cdot y)=a\cdot \tau(y)$,
\newline
$(3)$ ${}_A \la \tau(y) \, , \, x \ra=\phi( \, {}_C \la y, x \ra)$
\newline
for any $a\in A$, $c\in C$, $x\in X$, $y\in Y$.
Also, $\tau$ is bounded and $||\tau||\leq ||\phi||$.
Furthermore, $\tau$ is the unique linear map from $Y$ to $X$ satisfying Condition $(3)$.
\end{lemma}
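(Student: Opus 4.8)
The plan is to route every assertion through the non-degeneracy of the left $A$-valued inner product on $X$, after first recording the two structural inputs that the strong Morita equivalence of the inclusions supplies. These are: for $x,x'\in X$ one has ${}_A \la x,x'\ra={}_C \la x,x'\ra$ (so the $C$-valued inner product of two elements of $X$ is the $A$-valued one, and in particular $X$ carries a single unambiguous norm $||\xi||^2=||{}_A\la\xi,\xi\ra||=||{}_C\la\xi,\xi\ra||$ whether $\xi$ is viewed in $X$ or in $Y$); and non-degeneracy, i.e. if $\xi\in X$ satisfies ${}_A\la\xi,x\ra=0$ for all $x\in X$ then $\xi=0$ (take $x=\xi$). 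The genuine existence of an element $\tau(y)\in X$ satisfying Condition $(3)$ for each $y\in Y$ is the construction of \cite[Lemma 3.4]{KT4:morita}: one defines $\tau$ on the dense subspace spanned by the $c\cdot x$ ($c\in C$, $x\in X$) by $c\cdot x\mapsto\phi(c)\cdot x$, checks independence of the representative together with the inner-product formula, and extends by continuity. I expect this existence/well-definedness step to be the one substantive obstacle, since it is the only place where $\tau(y)$ must be \emph{produced} rather than merely characterised; everything else is formal.

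Granting existence, Condition $(3)$ is just the defining relation, and uniqueness is immediate: two linear maps satisfying $(3)$ agree against every $x\in X$ and hence coincide by non-degeneracy. Linearity of $\tau$ follows in the same manner, the defining relation being linear in $y$. I would then read off $(1)$ and $(2)$ by computing the left-hand inner product against an arbitrary $x'\in X$ and matching it to a candidate, concluding by non-degeneracy. For $(2)$, with $a\in A\subseteq C$, I use left-linearity of the inner product and left $A$-linearity of $\phi$:
$$
{}_A\la\tau(a\cdot y),x'\ra=\phi({}_C\la a\cdot y,x'\ra)=\phi(a\,{}_C\la y,x'\ra)=a\,\phi({}_C\la y,x'\ra)={}_A\la a\cdot\tau(y),x'\ra ,
$$
whence $\tau(a\cdot y)=a\cdot\tau(y)$. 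For $(1)$, with $c\in C$ and $x\in X$, the key move is to collapse the $C$-inner product of two elements of $X$ to its $A$-valued counterpart and then use right $A$-linearity of $\phi$ with ${}_A\la x,x'\ra\in A$:
$$
{}_A\la\tau(c\cdot x),x'\ra=\phi(c\,{}_C\la x,x'\ra)=\phi(c\,{}_A\la x,x'\ra)=\phi(c)\,{}_A\la x,x'\ra={}_A\la\phi(c)\cdot x,x'\ra ,
$$
and I note $\phi(c)\cdot x\in X$ since $X$ is a left $A$-module, so that non-degeneracy yields $\tau(c\cdot x)=\phi(c)\cdot x$.

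Finally, boundedness is a one-line Cauchy--Schwarz estimate once $(3)$ is in hand. Putting $x=\tau(y)$ in $(3)$ gives ${}_A\la\tau(y),\tau(y)\ra=\phi({}_C\la y,\tau(y)\ra)$, so
$$
||\tau(y)||^2=||{}_A\la\tau(y),\tau(y)\ra||\leq||\phi||\,||{}_C\la y,\tau(y)\ra||\leq||\phi||\,||y||\,||\tau(y)|| ,
$$
where the last inequality is Cauchy--Schwarz for the $C$-valued inner product on $Y$ combined with the norm agreement $||\tau(y)||_X=||\tau(y)||_Y$ recorded above. Dividing by $||\tau(y)||$ gives $||\tau(y)||\leq||\phi||\,||y||$, that is $||\tau||\leq||\phi||$. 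In summary, the only nontrivial ingredient is the existence of $\tau$ borrowed from \cite[Lemma 3.4]{KT4:morita}; the remaining claims are formal consequences of non-degeneracy, the bimodule linearity of $\phi$, and the identification ${}_A\la\cdot,\cdot\ra={}_C\la\cdot,\cdot\ra$ on $X$.
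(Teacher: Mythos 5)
Your proposal is correct and follows essentially the same route as the paper: existence of $\tau$ is delegated to \cite[Lemma 3.4]{KT4:morita}, and Conditions (1), (2) and the uniqueness are obtained by exactly the same inner-product computations against an arbitrary element of $X$ (using ${}_C\la x,z\ra={}_A\la x,z\ra$ on $X$, the $A$-bimodule property of $\phi$, and non-degeneracy). The only divergence is the norm estimate, where you apply Cauchy--Schwarz after setting $x=\tau(y)$ in Condition (3), while the paper instead uses the formula $||\tau(y)||=\sup\{||\,{}_A\la \tau(y),z\ra||:||z||\le 1\}$ from \cite{RW:continuous}; both arguments are valid and give $||\tau||\le||\phi||$.
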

\begin{proof} Except for the uniqueness of $\tau$, we can prove this lemma in the same way as in the proof
of \cite [Lemma 3.4 and Remark 3.3(ii)]{KT4:morita}. Indeed, by the definition of $\tau$, clearly $\tau$ satisfies
Condition (3). For any $x, z\in X$, $c\in C$,
$$
{}_A \la \tau(c\cdot x) \, , \, z \ra =\phi( \, {}_C \la c\cdot x \, , \, z \ra)=\phi(c\, {}_A \la x, z \ra)=\phi(c)\, {}_A \la x, z \ra
={}_A \la \phi(c)\cdot x \, , \, z \ra .
$$
Hence $\tau(c\cdot x)=\phi(c)\cdot x$ for any $x\in X$, $c\in C$. Also, for any $a\in A$, $y\in Y$, $z\in X$,
$$
{}_A \la \tau(a\cdot y) \, , \, z \ra =\phi( \, {}_C \la a\cdot y \, , \, z \ra)=a\phi(\, {}_C \la y, z \ra)
=a\, {}_A \la \tau(y) \, , \, z \ra =\, {}_A \la a\cdot \tau(y) \, , \, x \ra .
$$
Hence $\tau(a\cdot y)=a\cdot \tau(y)$ fo any $a\in A$, $y\in Y$. By Raeburn and Williams \cite [proof of Lemma 2.8]
{RW:continuous},
\begin{align*}
||\tau(y)|| & =\sup \{||\, {}_A \la \tau(y) \, , \, z \ra || \, \, | \, \, ||z||\leq 1, \, z\in X \} \\
& =\sup \{ \, ||\phi(\, {}_C \la y , z \ra)|| \, \, | \, \, ||z||\leq 1, \, z\in X \} \\
& \leq \sup\{ \, ||\phi|| \, ||y|| \, ||z|| \, \, | \, \, ||z||\leq 1, \, z\in X \} \\
& \leq ||\phi|| \, ||y|| .
\end{align*}
Thus $\tau$ is bounded and $||\tau||\leq ||\phi||$. Furthermore,
let $\tau'$ be a linear map from $Y$ to $X$ satisfying Condition (3). Then
for any $x\in X$, $y\in Y$,
$$
{}_A \la \tau(y) \, , \, x \ra =\phi( \, {}_C \la y, x \ra)={}_A \la \tau' (y) \, , \, x \ra .
$$
Hence $\tau(y)=\tau' (y)$ for any $y\in Y$. Thus $\tau$ is unique.
\end{proof}

\begin{lemma}\label{lem:con2} With the above notation, $\tau(y\cdot b)=\tau(y)\cdot b$ for any $b\in B$, $y\in Y$.
\end{lemma}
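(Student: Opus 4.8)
The plan is to verify the desired identity at the level of the left $A$-valued inner product and then invoke nondegeneracy, rather than appealing to the uniqueness clause of Lemma~\ref{lem:con1} (which is tailored to Condition~(3) for a fixed $\phi$ and so does not directly apply to the map $y\mapsto\tau(y)\cdot b$). Fix $b\in B$ and $y\in Y$, and let $x\in X$ be arbitrary. First I would compute ${}_A \la \tau(y\cdot b) \, , \, x \ra$ by the defining relation, Condition~(3), obtaining $\phi(\, {}_C \la y\cdot b \, , \, x \ra)$.

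The key move is to transfer the right action of $b$ off of $y$ across the $C$-valued inner product of the ambient bimodule $Y$. Since $B\subset D$ and $Y$ is a $C-D$-equivalence bimodule, the standard identity ${}_C \la \xi\cdot d \, , \, \eta \ra = {}_C \la \xi \, , \, \eta\cdot d^* \ra$, valid for all $d\in D$ and provable from the imprimitivity relation ${}_C \la \xi, \eta \ra \cdot \zeta = \xi\cdot \la \eta, \zeta \ra_D$ together with nondegeneracy of the actions, gives ${}_C \la y\cdot b \, , \, x \ra = {}_C \la y \, , \, x\cdot b^* \ra$. Because $X$ is a right $B$-submodule of $Y$, we have $x\cdot b^*\in X$, so I can reapply the definition of $\tau$ to the argument $x\cdot b^*$ and get $\phi(\, {}_C \la y \, , \, x\cdot b^* \ra)={}_A \la \tau(y) \, , \, x\cdot b^* \ra$. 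Finally, the same inner-product/right-action identity inside the $A-B$-equivalence bimodule $X$ yields ${}_A \la \tau(y) \, , \, x\cdot b^* \ra = {}_A \la \tau(y)\cdot b \, , \, x \ra$. Chaining these equalities shows ${}_A \la \tau(y\cdot b) \, , \, x \ra = {}_A \la \tau(y)\cdot b \, , \, x \ra$ for every $x\in X$.

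To conclude, I would set $\zeta:=\tau(y\cdot b)-\tau(y)\cdot b\in X$, so that the above computation gives ${}_A \la \zeta \, , \, x \ra =0$ for all $x\in X$. Taking $x=\zeta$ forces ${}_A \la \zeta \, , \, \zeta \ra =0$, whence $\zeta=0$ by positive-definiteness of the left $A$-valued inner product. This is exactly $\tau(y\cdot b)=\tau(y)\cdot b$.

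The individual steps are routine; the one point demanding care is the compatibility between the two bimodule structures. I must ensure that the transfer identity used on $Y$ genuinely applies with $d=b\in B\subset D$, and that the left $A$-valued inner product appearing after reapplying $\tau$ is the one coming from the $A-B$-structure on $X$, so that $\phi(\, {}_C \la y \, , \, x\cdot b^* \ra)$ is legitimately identified with ${}_A \la \tau(y) \, , \, x\cdot b^* \ra$ for the new argument $x\cdot b^*\in X$. This hinges on $X$ being a compatibly embedded closed subspace of $Y$, so that restricting the right $D$-action to $B$ preserves $X$ and the defining relation for $\tau$ holds verbatim with $x\cdot b^*$ in place of $x$. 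Once that compatibility is granted, no genuine obstacle remains.
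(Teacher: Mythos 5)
Your proof is correct, but it takes a different route from the paper's. The paper proves the identity first for the special elements $b=\la x,z\ra_B$ with $x,z\in X$, via the single chain $\tau(y\cdot\la x,z\ra_B)=\tau(\,{}_C\la y,x\ra\cdot z)=\phi(\,{}_C\la y,x\ra)\cdot z={}_A\la\tau(y),x\ra\cdot z=\tau(y)\cdot\la x,z\ra_B$, using Conditions (1) and (3) of Lemma \ref{lem:con1}, and then passes to general $b\in B$ using fullness of $X$ over $B$ together with the boundedness of $\tau$ and the continuity of the right action. You instead treat an arbitrary $b\in B$ at once: you pair $\tau(y\cdot b)$ against every $x\in X$, move $b$ across the $C$-valued inner product via the adjointability identity ${}_C\la \xi\cdot d,\eta\ra={}_C\la\xi,\eta\cdot d^*\ra$, use that $x\cdot b^*\in X$ to reapply Condition (3), move $b$ back across the $A$-valued inner product on $X$, and conclude by nondegeneracy (positive definiteness) of ${}_A\la\cdot,\cdot\ra$. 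Your argument trades the paper's density-plus-continuity step for the adjointability of the right module action with respect to the left inner product (which, as you note, itself follows from the imprimitivity relation and nondegeneracy), and it does not need the boundedness of $\tau$; the paper's version keeps the computation entirely inside the already-established Conditions (1) and (3) and leans on fullness instead. The only points that required care in your version --- that $x\cdot b^*$ stays in $X$ and that the restricted inner products on $X$ agree with those of $Y$ --- are exactly the compatibility conditions built into the definition of strong Morita equivalence of inclusions, so there is no gap.
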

\begin{proof} This can be proved in the same way as in the proof of \cite [Lemma 3.5]{KT4:morita}.
Indeed, for any $x, z\in X$, $y\in Y$,
$$
\tau(y\cdot \la x, z \ra_B )=\tau(\, {}_C \la y , x \ra\cdot z)=\phi(\, {}_C \la y, x \ra )\cdot z
={}_A \la \tau (y) \, , \, x \ra \cdot z =\tau(y)\cdot \la x, z \ra_B .
$$
Since $X$ is full with the right $B$-valued inner product, we obtain the conclusion.
\end{proof}

Let $\psi$ be the linear map from $D$ to $B$ defined by
$$
x\cdot \psi(d)=\tau(x\cdot d)
$$
for any $d\in D$, $x\in X$, where we identify ${}_A \BB (X)$ with $B$ as $C^*$-algebras.

\begin{lemma}\label{lem:con3} With the above notation, $\psi$ is a linear map from $D$ to $B$
satisfying the following conditions:
\newline
$(1)$ $\tau(x\cdot d)=x\cdot \psi(d)$,
\newline
$(2)$ $\psi(\la x, y \ra_D )=\la x \, , \, \tau(d) \ra_B$
\newline
for any $d\in D$, $x\in X$, $y\in Y$.
Also, $\psi$ is a bounded $B$-bimodule map from $D$ to $B$ with
$||\psi||\leq ||\tau||$. Furthermore, $\psi$ is the unique linear map from $D$ to $B$ satisfying Condition $(1)$.
\end{lemma}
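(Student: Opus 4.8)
The plan is to build $\psi$ as the right-hand analogue of $\tau$: just as $\tau$ was produced from $\phi$ through the left inner products ${}_A\la\cdot,\cdot\ra$ and ${}_C\la\cdot,\cdot\ra$, I would produce $\psi$ from $\tau$ through the right inner products $\la\cdot,\cdot\ra_B$ and $\la\cdot,\cdot\ra_D$, mirroring the argument of Lemma \ref{lem:con1}. Fix $d\in D$. By Lemma \ref{lem:con1}(2) and the boundedness of $\tau$, the assignment $x\mapsto\tau(x\cdot d)$ is a bounded left $A$-module endomorphism of $X$, since $\tau((a\cdot x)\cdot d)=\tau(a\cdot(x\cdot d))=a\cdot\tau(x\cdot d)$. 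Once this operator is shown to be adjointable, it corresponds under the identification ${}_A\BB(X)\cong B$ to a unique $\psi(d)\in B$ with $x\cdot\psi(d)=\tau(x\cdot d)$, which is Condition (1); uniqueness of a linear map satisfying (1) is then immediate, because $x\cdot(\psi(d)-\psi'(d))=0$ for all $x\in X$ forces $\psi(d)=\psi'(d)$ by faithfulness of the right $B$-action.

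The crux is the existence of $\psi(d)$ in $B$ together with Condition (2). Here I would first define $\psi$ on the dense subspace $\overline{\operatorname{span}}\,\la X,Y\ra_D=D$ (density holds because $X$ generates $Y$, i.e. $\overline{C\cdot X}=Y$) by the formula $\psi(\la x,y\ra_D)=\la x,\tau(y)\ra_B$, which is Condition (2). To see this is well defined, suppose $\sum_i\la x_i,y_i\ra_D=0$; then for every $x\in X$ one has $x\cdot\sum_i\la x_i,\tau(y_i)\ra_B=\sum_i{}_A\la x,x_i\ra\cdot\tau(y_i)=\tau(\sum_i{}_A\la x,x_i\ra\cdot y_i)=\tau(x\cdot\sum_i\la x_i,y_i\ra_D)=0$, where I used Lemma \ref{lem:con1}(2), the compatibility ${}_A\la x,x_i\ra={}_C\la x,x_i\ra$ for $x,x_i\in X$, and the pairing identity ${}_C\la x,x_i\ra\cdot y_i=x\cdot\la x_i,y_i\ra_D$ in $Y$; faithfulness of the right $B$-action then gives $\sum_i\la x_i,\tau(y_i)\ra_B=0$. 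Reading the same computation forwards shows $x\cdot\psi(\la x_0,y\ra_D)=\tau(x\cdot\la x_0,y\ra_D)$, i.e. Condition (1) on the span, so on this dense subspace the operator $x\mapsto\tau(x\cdot d)$ is right multiplication by $\psi(d)\in B$ and is in particular adjointable.

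For the norm bound I would use that the identification $B\cong{}_A\BB(X)$ is isometric: for $d$ in the dense subspace, $\|\psi(d)\|=\|R_{\psi(d)}\|=\sup_{\|x\|\le1}\|x\cdot\psi(d)\|=\sup_{\|x\|\le1}\|\tau(x\cdot d)\|\le\|\tau\|\,\|d\|$, where $R_{\psi(d)}$ denotes right multiplication by $\psi(d)$. Hence $\psi$ extends to a bounded linear map from $D$ to $B$ with $\|\psi\|\le\|\tau\|$, and Condition (1) persists to all $d\in D$ by continuity (in particular $x\mapsto\tau(x\cdot d)$ is adjointable for every $d$). The $B$-bimodule property is then routine: $\psi(d\cdot b')=\psi(d)\cdot b'$ follows from the right $B$-linearity of $\tau$ (Lemma \ref{lem:con2}) via $x\cdot\psi(db')=\tau((x\cdot d)\cdot b')=\tau(x\cdot d)\cdot b'=x\cdot(\psi(d)b')$, while $\psi(b\cdot d)=b\cdot\psi(d)$ follows by applying Condition (1) to $x\cdot b\in X$; in both cases faithfulness of the right $B$-action yields the identity of elements of $B$.

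The step I expect to be the main obstacle is the well-definedness in the second paragraph: guaranteeing that the bounded $A$-module operator $x\mapsto\tau(x\cdot d)$ is genuinely right multiplication by an honest element of $B$ (equivalently, that it is adjointable), rather than merely a bounded module map. This is exactly where the density of $\la X,Y\ra_D$ in $D$ --- that $X$ generates $Y$ as a $C$-$D$-bimodule --- and the compatibility between the two layers of inner products (${}_A\la\cdot,\cdot\ra$ with ${}_C\la\cdot,\cdot\ra$, and $\la\cdot,\cdot\ra_B$ with $\la\cdot,\cdot\ra_D$) are indispensable. By contrast, the $B$-bimodule property, the norm estimate, and the uniqueness are routine consequences of Lemmas \ref{lem:con1} and \ref{lem:con2} together with the faithfulness of the module actions.
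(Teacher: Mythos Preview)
Your argument is correct and the core computations---the chain
\[
x\cdot\la x_0,\tau(y)\ra_B={}_A\la x,x_0\ra\cdot\tau(y)=\tau({}_A\la x,x_0\ra\cdot y)=\tau(x\cdot\la x_0,y\ra_D),
\]
the norm estimate via $\|\psi(d)\|=\sup_{\|x\|\le1}\|\tau(x\cdot d)\|$, and the bimodule verification using Lemma~\ref{lem:con2} and $x\cdot b\in X$---are exactly those of the paper. The only organizational difference is the direction: the paper first \emph{defines} $\psi(d)$ by declaring the bounded left $A$-linear operator $x\mapsto\tau(x\cdot d)$ to be an element of ${}_A\BB(X)\cong B$, and then \emph{derives} Condition~(2) from Condition~(1); you instead define $\psi$ on the dense span $\la X,Y\ra_D$ via Condition~(2), check well-definedness, read off Condition~(1), and extend by continuity. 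Your route has the advantage of making explicit the point you flagged as the main obstacle: that $x\mapsto\tau(x\cdot d)$ is genuinely adjointable (hence lies in ${}_A\BB(X)$ rather than just among bounded module maps). The paper leaves this implicit, deferring to \cite[Proposition~3.6]{KT4:morita}. Both routes are valid and rest on the same identity; yours is simply more self-contained on the existence issue.
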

\begin{proof} We can prove this lemma in the same way as in the proof of \cite [Proposition 3.6]{KT4:morita}.
Indeed, by the definition of $\psi$, $\psi$ satisfies Condition (1). Also, for any $x, z\in X$, $y\in Y$,
$$
z\cdot \psi(\la x, y \ra_D )=\tau(z\cdot \la x, y \ra_D )=\tau({}_A \la z, x \ra\cdot y )
={}_A \la z, x \ra \cdot \tau(y)=z\cdot \la x\, , \, \tau(y) \ra_B .
$$
Hence $\psi(\la x, y \ra_D )=\la x \, , \, \tau(y) \ra_B$ for any $x\in X$, $y\in Y$.
For any $d\in D$,
\begin{align*}
||\psi(d)|| & =\sup \{\, ||x\cdot \psi(d) || \, | \, ||x||\leq 1 \, , \, x\in X \} \\
& =\sup \{ \, ||\tau(x\cdot d) || \, | \, ||x||\leq1 \, , \, x\in X  \} \\
& \leq \sup\{ \, ||\tau||\,||x||\,||d|| \, | \, ||x||\leq1 \, , \, x\in X  \} \\
& =||\tau|| \, ||d|| .
\end{align*}
Thus $\psi$ is bounded and $||\psi||\leq||\tau||$. Next, we show that $\psi$ is a $B$-bimodule map from
$D$ to $B$. It suffices to show that
$$
\psi(bd)=b\psi(d) \, , \, \psi(db)=\psi(d)b
$$
for any $b\in B$, $d\in D$. For any $b\in B$, $d\in D$, $x\in X$,
$$
x\cdot \psi(bd)=\tau(x\cdot bd)=\tau((x\cdot b)\cdot d)=(x\cdot b)\cdot\psi(d)=x\cdot b\psi(d)
$$
since $x\cdot b\in X$. Hence $\psi(bd)=b\psi(d)$. Also,
$$
x\cdot \psi(db)=\tau(x\cdot db)=\tau(x\cdot d)\cdot b=x\cdot \psi(d)b
$$
by Lemma \ref {lem:con2}. Let $\psi'$ be a linear map from $D$ to $B$ satisfying Condition (1).
Then for any $x\in X$, $d\in D$,
$$
x\cdot \psi(d)=\tau(x\cdot d)=x\cdot \psi' (d) .
$$
Hence $\psi(d)=\psi' (d)$ for any $d\in D$. Therefore, we obtain the conclusion.
\end{proof}

\begin{prop}\label{prop:con4} Let $A\subset C$ and $B\subset D$ be unital inclusions of unital
$C^*$-algebras. We suppose that $A\subset C$ and $B\subset D$ are strongly Morita equivalent with
respect to a $C-D$-equivalence bimodule $Y$ and its closed subspace $X$. Let $\phi$ be any element
in ${}_A \BB_A (C, A)$. Then there are the unique linear map $\tau$ from $Y$ to $X$ and the
unique element $\psi$ in ${}_B \BB_B (D, B)$ satisfying the following conditions:
\newline
$(1)$ $\tau(c\cdot x)=\phi(c)\cdot x$,
\newline
$(2)$ $\tau(a\cdot y)=a\cdot \tau(y)$,
\newline
$(3)$ ${}_A \la \tau(y) \, , \, x \ra =\phi( \, {}_C \la y, x \ra )$,
\newline
$(4)$ $\tau(x\cdot d)=x\cdot \psi(d)$,
\newline
$(5)$ $\tau(y\cdot b)=\tau(y)\cdot b$,
\newline
$(6)$ $\psi(\la x, y \ra_D )=\la x \, , \, \tau(y) \ra_B$
\newline
for any $a\in A$, $b\in B$, $c\in C$, $d\in D$, $x\in X$, $y\in Y$.
Furthermore, $||\psi||\leq ||\tau|| \leq ||\phi||$. Also, for any element $\psi\in {}_B \BB_B (D, B)$, we
have the same results as above. 
\end{prop}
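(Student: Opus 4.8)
The plan is to read the proposition off the three preceding lemmas, since every one of the six conditions has already been verified in them. First I would apply Lemma \ref{lem:con1}, which produces a linear map $\tau$ from $Y$ to $X$ satisfying Conditions $(1)$, $(2)$, $(3)$ and records $||\tau||\leq ||\phi||$. Condition $(5)$ is then exactly the content of Lemma \ref{lem:con2}. Finally Lemma \ref{lem:con3}, applied to this $\tau$, produces the map $\psi$ from $D$ to $B$, shows it lies in ${}_B \BB_B(D, B)$, verifies Conditions $(4)$ and $(6)$, and gives $||\psi||\leq ||\tau||$. Concatenating the two norm estimates yields $||\psi||\leq ||\tau||\leq ||\phi||$, the claimed bound.

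For the uniqueness assertions I would simply cite the uniqueness clauses already proved: $\tau$ is, by Lemma \ref{lem:con1}, the unique linear map from $Y$ to $X$ satisfying Condition $(3)$, and $\psi$ is, by Lemma \ref{lem:con3}, the unique linear map from $D$ to $B$ satisfying Condition $(4)$. Hence any pair meeting Conditions $(1)$--$(6)$ coincides with the constructed one, so $\tau$ and $\psi$ are determined by $\phi$.

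The only genuinely new point is the closing sentence, asserting that the construction can equally be started from an arbitrary $\psi\in {}_B \BB_B(D, B)$ to recover data on the $A\subset C$ side. Here I would invoke the symmetry of strong Morita equivalence of inclusions: since the $C-D$-equivalence bimodule $Y$ with closed subspace $X$ implements the equivalence of $A\subset C$ and $B\subset D$, the dual bimodule $\widetilde{Y}$ is a $D-C$-equivalence bimodule whose closed subspace $\widetilde{X}$ implements the equivalence of $B\subset D$ and $A\subset C$. Running Lemmas \ref{lem:con1}--\ref{lem:con3} verbatim with $(A, C, Y, X)$ replaced by $(B, D, \widetilde{Y}, \widetilde{X})$ then yields, for each $\psi$, a unique linear map from $\widetilde{Y}$ to $\widetilde{X}$ together with a unique $\phi\in {}_A \BB_A(C, A)$ satisfying the six corresponding conditions and the norm estimate $||\phi||\leq ||\psi||$. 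I expect the main (indeed the only) subtlety to be the verification that $\widetilde{X}\subset\widetilde{Y}$ again satisfies the definition of strong Morita equivalence of inclusions, so that the lemmas apply without change; once this is confirmed the whole statement follows by the same compilation.
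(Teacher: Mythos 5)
Your proposal is correct and follows the same route as the paper, which simply declares the proposition immediate from Lemmas \ref{lem:con1}, \ref{lem:con2} and \ref{lem:con3}; your compilation of the six conditions, the norm estimates, and the uniqueness clauses is exactly the intended argument. Your treatment of the final sentence via the dual bimodule $\widetilde{Y}\supset\widetilde{X}$ is the standard symmetry the author leaves implicit, so nothing is missing.
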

\begin{proof} This is immediate by Lemmas \ref{lem:con1}, \ref{lem:con2} and \ref{lem:con3}.
\end{proof}

For any element $\phi\in {}_A \BB_A (C, A)$, there are the unique element $\psi\in {}_B \BB_B (D, B)$
and the unique linear map $\tau:Y\to X$ satisfying Conditions (1)-(6) in Proposition \ref{prop:con4}.
We denote by $f$ the map from $\phi\in {}_A \BB_A (C, A)$ to the above $\psi\in {}_A \BB_B (D, B)$.
Then we have the
following theorem:

\begin{thm}\label{thm:con5} Let $A\subset C$ and $B\subset D$ be unital inclusions of unital $C^*$-algebras.
We suppose that $A\subset C$ and $B\subset D$ are strongly Morita equivalent. Then there is an isometric
isomorphism $f$ of ${}_A \BB_A (C, A)$ onto ${}_B \BB_B (D, B)$.
\end{thm}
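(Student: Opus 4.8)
The plan is to verify that the assignment $f\colon\phi\mapsto\psi$ defined via Proposition~\ref{prop:con4} is a linear, isometric bijection, which is exactly what ``isometric isomorphism'' of these Banach spaces means. Well-definedness and the bound $||f(\phi)||\leq||\phi||$ are already furnished by Proposition~\ref{prop:con4}, so three things remain: linearity, bijectivity, and the reverse norm inequality.

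First I would check linearity. For $\phi_1,\phi_2\in{}_A\BB_A(C,A)$ and scalars $\lambda_1,\lambda_2$, let $(\tau_i,\psi_i)$ be the triples attached to $\phi_i$. Relation $(3)$, ${}_A\la\tau(y),x\ra={}\phi({}_C\la y,x\ra)$, is linear in the pair $(\phi,\tau)$, so $\lambda_1\tau_1+\lambda_2\tau_2$ satisfies $(3)$ with $\lambda_1\phi_1+\lambda_2\phi_2$; by the uniqueness clause of Lemma~\ref{lem:con1} it is the $\tau$ attached to $\lambda_1\phi_1+\lambda_2\phi_2$. Feeding this into relation $(4)$, $\tau(x\cdot d)=x\cdot\psi(d)$, which is linear in $(\tau,\psi)$, and using the uniqueness in Lemma~\ref{lem:con3}, yields $f(\lambda_1\phi_1+\lambda_2\phi_2)=\lambda_1 f(\phi_1)+\lambda_2 f(\phi_2)$.

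The essential step is to exhibit the inverse. By the final sentence of Proposition~\ref{prop:con4}, the construction is symmetric: each $\psi\in{}_B\BB_B(D,B)$ determines a unique pair $(\tau,\phi)$ satisfying relations $(1)$--$(6)$, which defines a map $g\colon{}_B\BB_B(D,B)\to{}_A\BB_A(C,A)$, linear by the same argument as above and satisfying $||g(\psi)||\leq||\psi||$. The point is that relations $(1)$--$(6)$ are symmetric under interchanging $(\phi,A,C)$ with $(\psi,B,D)$, the single map $\tau\colon Y\to X$ mediating both ends; consequently the triple satisfying $(1)$--$(6)$ is uniquely determined by either of its endpoints. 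Thus, given $\phi$ with attached triple $(\phi,\tau,\psi)$ where $\psi=f(\phi)$, this same triple is the unique one with right-hand endpoint $\psi$, so $g(\psi)=\phi$, i.e.\ $g\circ f=\id$. The symmetric argument gives $f\circ g=\id$, so $f$ is bijective with $f^{-1}=g$.

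Isometry then follows at once: Proposition~\ref{prop:con4} gives $||f(\phi)||\leq||\phi||$, and applying the estimate for $g$ to $\psi=f(\phi)$ gives $||\phi||=||g(f(\phi))||\leq||f(\phi)||$, whence $||f(\phi)||=||\phi||$ for all $\phi$. Therefore $f$ is an isometric isomorphism of ${}_A\BB_A(C,A)$ onto ${}_B\BB_B(D,B)$. I expect the main obstacle to be the bijectivity step, namely confirming that the reverse construction reproduces the very same mediating map $\tau$ already attached to $\phi$; this is precisely what the uniqueness assertions in Lemmas~\ref{lem:con1} and~\ref{lem:con3} (and their symmetric counterparts) are designed to secure.
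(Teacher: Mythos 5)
Your proof is correct and takes essentially the same route as the paper, which simply declares linearity, bijectivity, and isometry to be immediate from Proposition~\ref{prop:con4}; you have filled in the details the paper leaves implicit (linearity via the uniqueness clauses, inverse via the symmetry of conditions (1)--(6), and the two one-sided norm estimates combining to give isometry).
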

\begin{proof} By the definition of $f$ and Proposition \ref{prop:con4},
$f$ is a linear map from ${}_A \BB_A (C, A)$ to ${}_B \BB_B (D, B)$.
Also, we can see that $f$ is a bijective isometric map from ${}_A \BB_A (C, A)$ onto ${}_B \BB_B (D, B)$
by Proposition \ref{prop:con4}.
\end{proof}

\begin{lemma}\label{lem:con6} With the above notation, let $\phi$ be any element in ${}_A \BB_A (C, A)$.
Then $f(\phi)$ is the unique linear map from $D$ to $B$ satisfying that
$$
{}_A \la x\cdot f(\phi)(d) \, , \, z \ra=\phi(\,{}_C \la x\cdot d \, , \, z \ra)
$$
for any $d\in D$, $x, z\in X$.
\end{lemma}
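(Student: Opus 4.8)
The plan is to read off the displayed identity directly from the defining relations of Proposition \ref{prop:con4}, and then to obtain uniqueness from the positive-definiteness of the left $A$-valued inner product together with the faithfulness of the right $B$-action on $X$. The key observation making the formula work is that, although $\psi=f(\phi)$ is characterized through $\tau$, one can eliminate $\tau$ entirely by combining Conditions $(3)$ and $(4)$.

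For the existence part, I would first note that for $x\in X$ and $d\in D$ the element $x\cdot d$ lies in $Y$: since $X$ is a closed subspace of the $C$--$D$-equivalence bimodule $Y$, the right $D$-action carries $x$ into $Y$, so $\tau(x\cdot d)$ is defined. By Condition $(4)$ of Proposition \ref{prop:con4} we have $x\cdot f(\phi)(d)=\tau(x\cdot d)$, and applying Condition $(3)$ with $y=x\cdot d\in Y$ then gives
$$
{}_A \la x\cdot f(\phi)(d) \, , \, z \ra={}_A \la \tau(x\cdot d) \, , \, z \ra=\phi(\, {}_C \la x\cdot d \, , \, z \ra)
$$
for any $z\in X$. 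This is precisely the asserted identity, so $f(\phi)$ satisfies the stated condition.

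For uniqueness, I would let $\psi'$ be any linear map from $D$ to $B$ satisfying the same identity, fix $d\in D$, and set $b=f(\phi)(d)-\psi'(d)\in B$. Subtracting the two identities yields ${}_A \la x\cdot b \, , \, z \ra=0$ for all $x, z\in X$. Taking $z=x\cdot b$ and using that the left $A$-valued inner product on $X$ is positive-definite, I obtain $x\cdot b=0$ for every $x\in X$. Since $X$ is an $A$--$B$-equivalence bimodule, the right action of $B$ on $X$ is faithful (recall that ${}_A \BB(X)$ is identified with $B$), so $b=0$, i.e. $\psi'(d)=f(\phi)(d)$; as $d$ was arbitrary, $\psi'=f(\phi)$.

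There is no serious obstacle here: the entire content lies in recognizing that $x\cdot d\in Y$ so that Condition $(3)$ may be invoked, and the only point requiring a little care is the uniqueness step, where one must pass from ${}_A \la x\cdot b,z\ra=0$ first to $x\cdot b=0$ (definiteness of the inner product) and then to $b=0$ (faithfulness of the $B$-action coming from the equivalence bimodule structure), rather than attempting to conclude $b=0$ in one stroke.
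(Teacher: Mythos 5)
Your proof is correct and follows essentially the same route as the paper: existence is read off from Conditions $(3)$ and $(4)$ of Proposition \ref{prop:con4} via $x\cdot f(\phi)(d)=\tau(x\cdot d)$, and uniqueness from the nondegeneracy of the ${}_A\la\cdot,\cdot\ra$-pairing together with the faithfulness of the right $B$-action. You merely spell out the last step (passing from ${}_A\la x\cdot b, z\ra=0$ to $b=0$) more carefully than the paper does.
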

\begin{proof} Let $\psi$ be another linear map from $D$ to $B$ satisfying that
$$
{}_A \la x\cdot \psi(d) \, , \, z \ra=\phi(\,{}_C \la x\cdot d \, , \, z \ra)
$$
for any $d\in D$, $x, z\in X$. Then for any $z\in X$,
$$
{}_A \la x\cdot f(\phi)(d) \, , \, z \ra={}_A \la x\cdot \psi(d) \, , \, z \ra
$$
Hence $f(\phi)(d)=\psi(d)$ for any $d\in D$. Therefore $f(\phi)=\psi$.
\end{proof}

\section{Matrix algebras over a unital $C^*$-algebra}\label{sec:MA}
Let $A\subset C$ and $B\subset D$ be unital inclusions of unital
$C^*$-algebras. We suppose that $A\subset C$ and $B\subset D$ are strongly Morita equivalent with
respect to a $C-D$-equivalence bimodule $Y$ and its closed subspace $X$. By \cite [Section 2]{KT4:morita},
there are a positive integer $n$ and a full projection $p\in M_n (A)$ such that
$$
B\cong pM_n (A)p \, , \, D\cong pM_n (C)p
$$
as $C^*$-algebras and such that
$$
X\cong (1\otimes e)M_n (A)p \, , \, Y\cong (1\otimes e)M_n (C)p
$$
as $A-B$-equivalence bimodules and $C-D$-equivalence bimodules, respectively, where
$$
e=\begin{bmatrix} 1 & 0 & \cdots & 0 \\
0 & 0 & \cdots & 0 \\
\vdots & \vdots & \ddots & \vdots \\
0 & 0 & \cdots & 0 \end{bmatrix}\in M_n (\BC)
$$
and we identify $A$, $C$ and $B$, $D$ with $(1\otimes e)M_n (A)(1\otimes e)$, $(1\otimes e)M_n (C)(1\otimes e)$
and $pM_n (A)p$, $pM_n (C)p$, respectively. We denote the above isomorphisms by
\begin{align*}
\Psi_B : & \, B\to pM_n (A)p, \\
\Psi_D : & \, D\to pM_n (C)p, \\
\Psi_X : & \, X\to (1\otimes e)M_n (A)p, \\
\Psi_Y : & \, Y\to (1\otimes e)M_n (C)p,
\end{align*}
respectively. In this section, we shall construct a map from ${}_A \BB_A (C, A)$ to
the space of all $pM_n (A)p$-bimodule maps,
${}_{pM_n (A)p} \BB_{pM_n (A)p}(pM_n (C)p \, , \, pM_n (A)p)$. Let $\phi\in {}_A \BB_A (C, A)$.
Let $\psi$ be the map from $M_n (C)$ to $M_n (A)$ defined by
$$
\psi(x)=(\phi\otimes\id)(x)
$$
for any $x\in M_n (C)$. Since $\psi(p)=p$, by easy computations, $\psi$ can be regarded
as an element in ${}_{pM_n (A)p} \BB_{pM_n (A)p}(pM_n (C)p \, , \, pM_n (A)p)$. 
We denote by $F$ the map from $\phi\in {}_A \BB_A (C, A)$ to the above
$\psi\in{}_{pM_n (A)p} \BB_{pM_n (A)p}(pM_n (C)p\, , \, pM_n (A)p)$.

\begin{remark}\label{remark:MA1} We note that the unital inclusion of unital $C^*$-algebras
$A\subset C$ is strongly Morita equivalent to the unital inclusion of unital $C^*$-algebras
$pM_n (A)p\subset pM_n (C)p$ with respect to the $C-pM_n (C)p$-equivalence bimodule
$(1\otimes e)M_n (C)p$ and its closed subspace $(1\otimes e)M_n (A)p$, where we identify
$A$ and $C$ with $(1\otimes e)M_n (A)(1\otimes e)$ and $(1\otimes e)M_n (C)(1\otimes e)$,
respectively.
\end{remark}

\begin{lemma}\label{lem:MA2} With the above notation, let $\phi\in {}_A \BB_A (C, A)$. Then
for any $c\in M_n (C)$, $x, y\in M_n (A)$,
$$
{}_A \la (1\otimes e)xp\cdot F(\phi)(pcp) \, , \, (1\otimes e)zp \ra=\phi(\, {}_C \la (1\otimes e)xp\cdot pcp \, , \,
(1\otimes e)zp \ra ) .
$$
\end{lemma}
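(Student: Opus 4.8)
The plan is to verify the asserted identity by direct computation, transporting everything into the concrete matrix model $X=(1\otimes e)M_n(A)p$, $Y=(1\otimes e)M_n(C)p$ and comparing the two sides as elements of $A=(1\otimes e)M_n(A)(1\otimes e)$. The computation rests on three concrete facts which I would record first: the left $A$-valued and $C$-valued inner products on $X$ and $Y$ are both given by $(\xi,\eta)\mapsto\xi\eta^*$ (matrix multiplication followed by adjoint), the right actions of $B=pM_n(A)p$ and $D=pM_n(C)p$ are given by matrix multiplication on the right, and, most importantly, that under the identification $C\cong(1\otimes e)M_n(C)(1\otimes e)$ the map $\phi$ coincides with the restriction of $\psi=F(\phi)=\phi\otimes\id$ to the $(1,1)$-corner, since the corner element corresponding to $c\in C$ is $c\otimes e$ and $(\phi\otimes\id)(c\otimes e)=\phi(c)\otimes e$. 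The real engine of the argument is that $\phi\otimes\id$ is an $M_n(A)$-bimodule map, which lets any $M_n(A)$-valued factor pass through it; here $1\otimes e$, $p$, $x$ and $z^*$ all lie in $M_n(A)$.

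Next I would expand the left-hand side. Since $F(\phi)(pcp)=(\phi\otimes\id)(pcp)=p(\phi\otimes\id)(c)p$ by $M_n(A)$-bimodularity and $p\in M_n(A)$, the right $B$-action gives $(1\otimes e)xp\cdot F(\phi)(pcp)=(1\otimes e)xp(\phi\otimes\id)(c)p$, using $p^2=p$. Pairing this with $(1\otimes e)zp$ in the left $A$-valued inner product then yields $(1\otimes e)xp(\phi\otimes\id)(c)pz^*(1\otimes e)$.

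For the right-hand side I would first compute the right $D$-action, $(1\otimes e)xp\cdot pcp=(1\otimes e)xpcp$, and then the $C$-valued inner product with $(1\otimes e)zp$, giving the corner element $(1\otimes e)xpcpz^*(1\otimes e)\in(1\otimes e)M_n(C)(1\otimes e)\cong C$. Applying $\phi$, which on this corner is exactly $\phi\otimes\id$, and then using $M_n(A)$-bimodularity to pull the $M_n(A)$-valued frames $(1\otimes e)xp$ and $pz^*(1\otimes e)$ outside, produces $(1\otimes e)xp(\phi\otimes\id)(c)pz^*(1\otimes e)$, which is precisely the expression obtained for the left-hand side.

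The only place requiring care—and the step I expect to be the main obstacle—is the bookkeeping of the identifications $\Psi_X,\Psi_Y$: confirming that the left inner products really are $\xi\eta^*$ in this model, that the right actions sit on the correct side, and above all that the abstract $\phi\colon C\to A$ is faithfully represented by $\phi\otimes\id$ on the corner $(1\otimes e)M_n(C)(1\otimes e)$. Once these identifications are pinned down, the equality is a one-line consequence of the $M_n(A)$-bimodule property of $\phi\otimes\id$, since both sides collapse to $(1\otimes e)xp(\phi\otimes\id)(c)pz^*(1\otimes e)$. I would also remark that this identity is exactly the concrete form, in the matrix model, of the characterizing relation for $f(\phi)$ in Lemma \ref{lem:con6}, so the result is what one should anticipate.
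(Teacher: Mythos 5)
Your computation is correct and is essentially identical to the paper's own proof: both sides are reduced, via the concrete inner products $\xi\eta^*$ on $(1\otimes e)M_n(C)p$ and the $M_n(A)$-bimodularity of $\phi\otimes\id$, to the common expression $(1\otimes e)xp(\phi\otimes\id)(c)pz^*(1\otimes e)$. The identification issues you flag (that $\phi$ acts as $\phi\otimes\id$ on the corner $(1\otimes e)M_n(C)(1\otimes e)$) are exactly the ones the paper invokes, so no gap remains.
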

\begin{proof} This can be proved by routine computations. Indeed, for any $c\in M_n (C)$, $x, y\in M_n (A)$,
\begin{align*}
{}_A \la (1\otimes e)xp\cdot F(\phi)(pcp) \, , \, (1\otimes e)zp \ra &= 
{}_A \la (1\otimes e)xp(\phi\otimes\id)(pcp) \, , \, (1\otimes e)zp \ra \\
& =(1\otimes e)xp(\phi\otimes\id)(pcp)pz^* (1\otimes e) \\
& =(1\otimes e)xp(\phi\otimes\id)(c)pz^* (1\otimes e) .
\end{align*}
On the other hand,
$$
\phi(\, {}_C \la (1\otimes e)xp\cdot pcp \, , \, (1\otimes e)zp \ra)
=\phi((1\otimes e)xpcpz^* (1\otimes e)) .
$$
Since we identify $C$ with $(1\otimes e)M_n (C)(1\otimes e)$,
$$
\phi(\, {}_C \la (1\otimes e)xp\cdot pcp \, , \, (1\otimes e)zp \ra)
=(1\otimes e)xp(\phi\otimes\id)(c)pz^* (1\otimes e) .
$$
Thus, we obtain the conclusion.
\end{proof}

\begin{lemma}\label{lem:Ma3} With the above notation, for any $\phi\in {}_A \BB_A (C, A)$,
$$
f(\phi)=\Psi_B^{-1}\circ F(\phi)\circ\Psi_D .
$$
\end{lemma}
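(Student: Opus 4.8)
The plan is to characterize $f(\phi)$ abstractly and then check that the map $\Psi_B^{-1}\circ F(\phi)\circ\Psi_D$ satisfies the same characterization, so that uniqueness forces the two to coincide. By Lemma \ref{lem:con6}, $f(\phi)$ is the unique linear map from $D$ to $B$ with
$$
{}_A \la \xi\cdot f(\phi)(d) \, , \, \zeta \ra=\phi(\,{}_C \la \xi\cdot d \, , \, \zeta \ra)
$$
for all $d\in D$ and $\xi,\zeta\in X$. Set $\psi:=\Psi_B^{-1}\circ F(\phi)\circ\Psi_D$, a linear map from $D$ to $B$. It therefore suffices to verify that $\psi$ obeys this same defining identity; the equality $\psi=f(\phi)$ then follows at once from the uniqueness assertion of Lemma \ref{lem:con6}.

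First I would transport the left-hand side through the identifications of Section \ref{sec:MA}. Fix $d\in D$ and $\xi,\zeta\in X$, and write $\Psi_D(d)=pcp$ with $c\in M_n(C)$, $\Psi_X(\xi)=(1\otimes e)xp$ and $\Psi_X(\zeta)=(1\otimes e)zp$ with $x,z\in M_n(A)$. Because $\Psi_X$ is an isomorphism of $A-B$-equivalence bimodules that is compatible with $\Psi_B$ on the right coefficient algebra and preserves the left $A$-valued inner product, and because $\Psi_B(\psi(d))=F(\phi)(pcp)$, I obtain
$$
{}_A \la \xi\cdot\psi(d) \, , \, \zeta \ra={}_A \la (1\otimes e)xp\cdot F(\phi)(pcp) \, , \, (1\otimes e)zp \ra ,
$$
where the inner product on the right is the left $A$-valued inner product on $(1\otimes e)M_n (A)p$. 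By Lemma \ref{lem:MA2} the right-hand side equals $\phi(\,{}_C \la (1\otimes e)xp\cdot pcp \, , \, (1\otimes e)zp \ra)$.

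It then remains to match this with $\phi(\,{}_C \la \xi\cdot d \, , \, \zeta \ra)$. For this I would use that $\Psi_X$ is the restriction of $\Psi_Y$ to $X$ and that $\Psi_Y$ is an isomorphism of $C-D$-equivalence bimodules compatible with $\Psi_D$ and preserving the left $C$-valued inner product, so that $\Psi_Y(\xi\cdot d)=(1\otimes e)xp\cdot pcp$ and $\Psi_Y(\zeta)=(1\otimes e)zp$, whence
$$
\phi(\,{}_C \la \xi\cdot d \, , \, \zeta \ra)=\phi(\,{}_C \la (1\otimes e)xp\cdot pcp \, , \, (1\otimes e)zp \ra) .
$$
Combining the two displays yields ${}_A \la \xi\cdot\psi(d) \, , \, \zeta \ra=\phi(\,{}_C \la \xi\cdot d \, , \, \zeta \ra)$ for all admissible $d,\xi,\zeta$, so $\psi$ satisfies the defining equation of Lemma \ref{lem:con6} and therefore $\psi=f(\phi)$.

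The step I expect to be the main obstacle is the bookkeeping needed to justify the transport identities above: one must confirm that $\Psi_X$, $\Psi_Y$, $\Psi_B$, $\Psi_D$ simultaneously intertwine the right module actions, restrict compatibly so that $X\subset Y$ corresponds to $(1\otimes e)M_n (A)p\subset(1\otimes e)M_n (C)p$, and preserve the left $A$-valued and $C$-valued inner products under the identifications of $A$ and $C$ with $(1\otimes e)M_n (A)(1\otimes e)$ and $(1\otimes e)M_n (C)(1\otimes e)$. These compatibilities are precisely what the construction in \cite[Section 2]{KT4:morita} and Remark \ref{remark:MA1} supply, after which the verification collapses to a single application of Lemma \ref{lem:MA2} together with the uniqueness in Lemma \ref{lem:con6}.
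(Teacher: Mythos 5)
Your proposal is correct and follows essentially the same route as the paper: both reduce the claim to the uniqueness statement of Lemma \ref{lem:con6}, transport the inner products through the isomorphisms $\Psi_X$, $\Psi_Y$, $\Psi_B$, $\Psi_D$ (the compatibilities you flag are exactly \cite[Lemma 2.6 and Corollary 2.7]{KT4:morita}), and invoke Lemma \ref{lem:MA2} for the middle step. No gaps.
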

\begin{proof} By Lemma \ref{lem:con6}, it suffices to show that
$$
{}_A \la x\cdot (\Psi_B^{-1}\circ F(\phi)\circ\Psi_D )(d) \, , \, z \ra
=\phi(\, {}_C \la x\cdot d \, , \, z \ra)
$$
for any $d\in D$, $x, z\in X$. Indeed, for any $d\in D$, $x, z\in X$,
\begin{align*}
& {}_A \la x\cdot (\Psi_B^{-1}\circ F(\phi)\circ\Psi_D )(d) \, , \, z \ra \\
& =\,  {}_A \la \Psi_X (x\cdot (\Psi_B^{-1}\circ F(\phi)\circ\Psi_D)(d)) \, , \, \Psi_X (z) \ra \quad
(\text{by \cite [Lemma 2.6(3)]{KT4:morita}}) \\
& ={}_A \la \Psi_X (x)\cdot (F(\phi)\circ\Psi_D)(d) \, , \, \Psi_X (z) \ra \quad
(\text{by \cite [Lemma 2.6(2)]{KT4:morita}}) \\
& =\phi(\, {}_C \la \Psi_X (x)\cdot \Psi_D (d) \, , \, \Psi_X (z) \ra ) \quad
(\text{by Lemma \ref{lem:MA2}}) \\
& =\phi(\, {}_C \la \Psi_Y (x\cdot d) \, , \, \Psi_Y (z) \ra ) \quad
(\text{by \cite [Corollary 2.7(2), (5)]{KT4:morita}}) \\
& =\phi(\, {}_C \la x\cdot d \, , \, z \ra) \quad
(\text{by \cite [Corollary 2.7(3)]{KT4:morita}}) .
\end{align*}
Therefore, $f(\phi)=\Psi_B^{-1}\circ F(\phi)\circ\Psi_D$ for any $\phi\in {}_A \BB_A (C, A)$
by Lemma \ref{lem:con6}.
\end{proof}

\section{Basic properties}\label{sec:BP}
Let $A\subset C$ and $B\subset D$ be unital inclusions of unital $C^*$-algebras.
We suppose that they are strongly Morita equivalent with respect to a $C-D$-equivalence
bimodule $Y$ and its closed subspace $X$. Let ${}_A \BB_A (C, A)$ and ${}_B \BB_B (D, B)$
be as above and let $f$ be the isometric isomorphism of ${}_A \BB_A (C, A)$ onto
${}_B \BB_B (D, B)$ defined in Section \ref{sec:con}. In this section, we give basic properties about $f$.

\begin{lemma}\label{lem:BP1} With the above notation, we have the following:
\newline
$(1)$ For any selfadjoint linear map $\phi\in {}_A \BB_A (C, A)$, $f(\phi)$ is selfadjoint.
\newline
$(2)$ For any positive linear map $\phi\in{}_A \BB_A (C, A)$, $f(\phi)$ is positive.
\end{lemma}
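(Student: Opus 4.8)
The plan is to use the intrinsic description of $f(\phi)$ furnished by Lemma~\ref{lem:con6}: $f(\phi)$ is the unique linear map $D\to B$ with
$$
{}_A \la x\cdot f(\phi)(d) \, , \, z \ra=\phi(\, {}_C \la x\cdot d \, , \, z \ra)
$$
for all $d\in D$ and $x,z\in X$. The whole argument then reduces to manipulating this identity with the standard inner-product rules ${}_A \la u,v \ra^* ={}_A \la v,u \ra$ and ${}_C \la u,v \ra^* ={}_C \la v,u \ra$, the right-action compatibilities ${}_A \la u\cdot b,v \ra ={}_A \la u,v\cdot b^* \ra$ and ${}_C \la u\cdot d,v \ra ={}_C \la u,v\cdot d^* \ra$, together with the identification of $B$ with ${}_A \BB(X)$. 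I expect that no machinery beyond these is needed.

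For (1), I would fix a selfadjoint $\phi$ and define a linear map $g:D\to B$ by $g(d)=f(\phi)(d^*)^*$; note that $g$ is genuinely linear because $d\mapsto d^*$ and $b\mapsto b^*$ are each conjugate-linear. The idea is to verify that $g$ satisfies the defining identity of Lemma~\ref{lem:con6} and then invoke the uniqueness there to get $g=f(\phi)$, which is exactly $f(\phi)(d^*)^*=f(\phi)(d)$, i.e.\ selfadjointness of $f(\phi)$. Concretely, starting from ${}_A \la x\cdot g(d),z \ra ={}_A \la x\cdot f(\phi)(d^*)^*,z \ra$, one moves $f(\phi)(d^*)^*$ across the inner product via the compatibility rule and conjugate-symmetry to reach ${}_A \la z\cdot f(\phi)(d^*),x \ra^*$, applies Lemma~\ref{lem:con6} to the map $z\mapsto z\cdot f(\phi)(d^*)$, then uses that $\phi$ is selfadjoint to pull the adjoint inside $\phi$, and finally rewrites ${}_C \la z\cdot d^*,x \ra^* = {}_C \la x,z\cdot d^* \ra = {}_C \la x\cdot d,z \ra$. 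The resulting identity ${}_A \la x\cdot g(d),z \ra =\phi(\,{}_C \la x\cdot d,z \ra)$ is precisely the characterization of $f(\phi)$, so $g=f(\phi)$.

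For (2), I would use the positivity criterion for the left Hilbert $A$-module $X$: an element $b\in B\cong {}_A \BB(X)$ is positive if and only if ${}_A \la x\cdot b,x \ra \geq 0$ in $A$ for every $x\in X$, since right multiplication $x\mapsto x\cdot b$ is an adjointable operator on $X$ (viewed as a left Hilbert $A$-module) with adjoint $x\mapsto x\cdot b^*$, and this assignment transports positivity. Taking $z=x$ in Lemma~\ref{lem:con6} gives ${}_A \la x\cdot f(\phi)(d),x \ra =\phi(\,{}_C \la x\cdot d,x \ra)$. For $d\geq 0$ I would write $d=d^{1/2}\cdot d^{1/2}$ with $d^{1/2}$ selfadjoint and use the compatibility rule to obtain ${}_C \la x\cdot d,x \ra ={}_C \la x\cdot d^{1/2},x\cdot d^{1/2} \ra \geq 0$; positivity of $\phi$ then forces $\phi(\,{}_C \la x\cdot d,x \ra)\geq 0$, so ${}_A \la x\cdot f(\phi)(d),x \ra \geq 0$ for all $x$, whence $f(\phi)(d)\geq 0$ by the criterion.

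The main obstacle I anticipate is pinning down this positivity criterion correctly: being sure that $b\geq 0$ in $B$ is equivalent to ${}_A \la x\cdot b,x \ra \geq 0$ for all $x$, with the correct inner product and direction of the module action, and that $b\mapsto(x\mapsto x\cdot b)$ is indeed an (anti-)isomorphism of $C^*$-algebras onto ${}_A \BB(X)$ carrying positive elements to positive operators. Once this identification is secured, both parts are short. It is worth emphasizing that only positivity (not complete positivity) of $\phi$ is used: the factorization $d=d^{1/2}\cdot d^{1/2}$ takes place inside $D$ and lets $\phi$ act on the single genuinely positive element ${}_C \la x\cdot d^{1/2},x\cdot d^{1/2} \ra$ of $C$, which is why the statement need not be routed through $\phi\otimes\id$ as in Lemma~\ref{lem:Ma3}.
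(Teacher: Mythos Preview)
Your proposal is correct and follows essentially the same route as the paper: for (1) the paper performs the identical chain of inner-product manipulations (just written as a direct equality rather than framed via uniqueness), and for (2) the paper likewise reduces to ${}_A \la x\cdot f(\phi)(d),x\ra=\phi(\,{}_C \la x\cdot d,x\ra)\ge 0$. The positivity criterion you flag as the main obstacle---that $b\ge 0$ in $B\cong{}_A\BB(X)$ iff ${}_A\la x\cdot b,x\ra\ge 0$ for all $x\in X$, and likewise that ${}_C\la x\cdot d,x\ra\ge 0$ when $d\ge 0$---is exactly what the paper invokes by citing \cite[Lemma~2.28]{RW:continuous}, so your square-root argument is a self-contained substitute for that reference.
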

\begin{proof} (1) Let $\phi$ be any selfadjoint linear map in ${}_A \BB_A (C, A)$ and
let $d\in D$, $x, z\in X$. By lemma \ref{lem:con6},
\begin{align*}
{}_A \la x\cdot f(\phi)(d^* ) \, , \, z \ra & =\phi( \, {}_C \la x\cdot d^* \, , \, z \ra )
=\phi( \, {}_C \la z\cdot d \, , \, x \ra^* ) \\
& =\phi( \, {}_C \la z\cdot d \, , \, x \ra )^* 
={}_A \la z \cdot f(\phi)(d) \, , \, x \ra^* \\
& ={}_A \la x\cdot f(\phi)(d)^* \, , \, z \ra .
\end{align*}
Hence $f(\phi)(d^* )=f(\phi)(d)^*$ for any $d\in D$.
(2) Let $d$ be any positive element in $D$.
Then ${}_C \la x\cdot d , x \ra\geq 0$ for any $x\in X$ by Raeburn and Williams
\cite [Lemma 2.28]{RW:continuous}. Hence $\phi( \, {}_C \la x\cdot d , x \ra)\geq 0$ for any $x\in X$. That is,
${}_A \la x\cdot f(\phi)(d)\, , \, x \ra \geq 0$ for any $x\in X$. Thus $f(\phi)(d)\geq 0$ by \cite
[Lemma 2.28]{RW:continuous}. Therefore, we obtain the conclusion.
\end{proof}

\begin{prop}\label{prop:BP3} If $\phi$ is a conditional expectation from $C$ onto $A$. Then
$f(\phi)$ is a conditional expectation from $D$ onto $B$.
\end{prop}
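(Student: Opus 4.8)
The plan is to reduce the statement to a single identity and then verify it with Lemma~\ref{lem:con6}. Recall that, for a unital inclusion $A\subset C$, a conditional expectation from $C$ onto $A$ is exactly a positive element $\phi\in {}_A \BB_A (C, A)$ with $\phi(1_C)=1_A$ (equivalently $\phi|_A=\id_A$); the $A$-bimodule property is already built into membership in ${}_A \BB_A (C, A)$. Since $f(\phi)\in {}_B \BB_B (D, B)$ is automatically a $B$-bimodule map, and since a conditional expectation is positive so that $f(\phi)$ is positive by Lemma~\ref{lem:BP1}(2), the entire task collapses to proving $f(\phi)(1_D)=1_B$. Indeed, once this is known, the $B$-bimodule property gives $f(\phi)(b)=f(\phi)(b\cdot 1_D)=b\cdot f(\phi)(1_D)=b\cdot 1_B=b$ for all $b\in B$, so $f(\phi)|_B=\id_B$, and then $\|f(\phi)\|=\|f(\phi)(1_D)\|=1$ follows automatically for a unital positive map.

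For the main identity I would apply Lemma~\ref{lem:con6} with $d=1_D$. Because $B\subset D$ is unital we have $1_D=1_B$, and since $X$ is closed under the right $B$-action, $x\cdot 1_D=x$ for every $x\in X$. Hence Lemma~\ref{lem:con6} reads ${}_A \la x\cdot f(\phi)(1_D) \, , \, z \ra=\phi(\, {}_C \la x \, , \, z \ra)$ for all $x,z\in X$. Here I use the compatibility of the two left inner products on the subspace $X\subseteq Y$: for $x,z\in X$ one has ${}_C \la x, z \ra={}_A \la x, z \ra$ as an element of $A\subseteq C$. Since $\phi$ is a conditional expectation it restricts to the identity on $A$, so the right-hand side equals ${}_A \la x, z \ra={}_A \la x\cdot 1_B \, , \, z \ra$. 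Thus ${}_A \la x\cdot f(\phi)(1_D) \, , \, z \ra={}_A \la x\cdot 1_B \, , \, z \ra$ for all $x,z\in X$.

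To finish, I would invoke nondegeneracy of the left $A$-valued inner product to conclude $x\cdot f(\phi)(1_D)=x\cdot 1_B$ for every $x\in X$, and then faithfulness of the right $B$-action on the equivalence bimodule $X$ to obtain $f(\phi)(1_D)=1_B$. The one delicate point, which I expect to be the main obstacle, is justifying ${}_C \la x, z \ra={}_A \la x, z \ra$ for $x,z\in X$; I would argue this either from the defining compatibility of the strongly Morita equivalent pair $(X,Y)$, or directly from the matrix presentation of Section~\ref{sec:MA}, where $X\cong(1\otimes e)M_n (A)p\subseteq(1\otimes e)M_n (C)p\cong Y$ and both inner products are computed as $xz^*$, landing in $(1\otimes e)M_n (A)(1\otimes e)\cong A$. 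As an alternative route avoiding this point entirely, I could use Lemma~\ref{lem:Ma3}: since $\phi\otimes\id$ is a conditional expectation from $M_n (C)$ onto $M_n (A)$ and $p\in M_n (A)$, its compression $F(\phi)$ is a conditional expectation from $pM_n (C)p$ onto $pM_n (A)p$; transporting through the isomorphisms $\Psi_D$ and $\Psi_B$ then shows that $f(\phi)=\Psi_B^{-1}\circ F(\phi)\circ\Psi_D$ is a conditional expectation from $D$ onto $B$.
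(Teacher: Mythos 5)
Your proof is correct and follows essentially the same route as the paper: both reduce the problem to showing $f(\phi)|_B=\id_B$ via Lemma~\ref{lem:con6} together with the compatibility ${}_C\la x,z\ra={}_A\la x,z\ra$ for $x,z\in X$ (which is part of the definition of strong Morita equivalence for inclusions, so not actually an obstacle), and then invoke Lemma~\ref{lem:BP1} for positivity. The only cosmetic difference is that you evaluate at $1_D$ and extend by $B$-bimodularity, whereas the paper applies Lemma~\ref{lem:con6} directly to an arbitrary $b\in B$.
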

\begin{proof} Since $f(\phi)(a)=a$ for any $a\in A$, for any $b\in B$, $x, z\in X$,
$$
{}_A \la x\cdot f(\phi)(b) \, , \, z \ra =\phi(\, {}_C \la x\cdot b \, , \, z \ra)
=\phi(\, {}_A \la x\cdot b \, , \, z \ra)={}_A\la x\cdot b \, , \, z \ra
$$
by Lemma \ref{lem:con6}. Thus $f(\phi)(b)=b$ for any $b\in B$. By Proposition \ref{prop:con4} and
Lemma \ref{lem:BP1}, we obtain the conclusion.
\end{proof}

Since $A\subset C$ and $B\subset D$ are strongly Morita equivalent with respect to $Y$ and its closed
subset $X$, for any $n\in \BN$, $M_n (A)\subset M_n (C)$ and $M_n (B)\subset M_n (D)$ are
strongly Morita equivalent 
with respect to the $M_n (C)-M_n (D)$-equivalence bimodule $Y\otimes M_n (\BC)$ and its closed subspace
$X\otimes M_n (\BC)$, where we regard $M_n (\BC)$ as the trivial $M_n (\BC)-M_n (\BC)$-equivalence
bimodule. Let $f_n$ be the isometric isomorphism of ${}_{M_n (A)} \BB_{M_n (A)} (M_n (C), M_n (A))$ onto
${}_{M_n (B)} \BB_{M_n (B)}(M_n (D), M_n (B))$ defined in the same way as in the definition of
$f:{}_A \BB_A (C, A)\to {}_B \BB_B (D, B)$. Let $\phi\in {}_A \BB_A (C, A)$. Then
$$
\phi\otimes\id_{M_n (\BC)}\in {}_{M_n (A)} \BB_{M_n (A)} (M_n (C), M_n (A)) .
$$

\begin{lemma}\label{lem:BP4} With the above notation, let $n\in \BN$. Then for any $\phi\in {}_A \BB_A (C, A)$,
$$
f_n (\phi\otimes\id_{M_n (\BC)})=f(\phi)\otimes\id_{M_n (\BC)} .
$$
\end{lemma}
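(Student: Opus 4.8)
The plan is to deduce the identity from the uniqueness characterization of Lemma \ref{lem:con6}, applied to the $M_n$-setup. Since $f_n$ is defined in exactly the same way as $f$, but for the strongly Morita equivalent inclusions $M_n (A)\subset M_n (C)$ and $M_n (B)\subset M_n (D)$ with respect to the equivalence bimodule $Y\otimes M_n (\BC)$ and its closed subspace $X\otimes M_n (\BC)$, the analogue of Lemma \ref{lem:con6} holds: for any $\Phi\in {}_{M_n (A)} \BB_{M_n (A)}(M_n (C), M_n (A))$, the map $f_n (\Phi)$ is the unique linear map from $M_n (D)$ to $M_n (B)$ satisfying ${}_{M_n (A)} \la \xi\cdot f_n (\Phi)(\delta) \, , \, \zeta \ra =\Phi(\, {}_{M_n (C)} \la \xi\cdot\delta \, , \, \zeta \ra)$ for all $\delta\in M_n (D)$ and $\xi, \zeta\in X\otimes M_n (\BC)$. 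Taking $\Phi=\phi\otimes\id_{M_n (\BC)}$, it therefore suffices to show that $f(\phi)\otimes\id_{M_n (\BC)}$ satisfies this same defining equation; the desired equality then follows from uniqueness.

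Next I would record the module structure on the tensor products, regarding $M_n (\BC)$ as the trivial $M_n (\BC)-M_n (\BC)$-equivalence bimodule. On elementary tensors the relevant operations are $(x\otimes s)\cdot(b\otimes r)=(x\cdot b)\otimes(sr)$, $(x\otimes s)\cdot(d\otimes r)=(x\cdot d)\otimes(sr)$, and ${}_{M_n (A)} \la x\otimes s \, , \, z\otimes t \ra ={}_A \la x, z \ra \otimes st^*$, with ${}_{M_n (C)} \la \, \cdot \, , \, \cdot \, \ra$ given by the same formula over $C$. Since all maps involved are linear and bounded and elementary tensors span dense subspaces, it is enough to verify the defining equation on elements $\delta=d\otimes r$, $\xi=x\otimes s$, $\zeta=z\otimes t$ with $d\in D$, $x, z\in X$, $r, s, t\in M_n (\BC)$.

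For such elements a direct computation of the left-hand side gives ${}_{M_n (A)} \la \xi\cdot(f(\phi)\otimes\id)(\delta) \, , \, \zeta \ra ={}_A \la x\cdot f(\phi)(d) \, , \, z \ra \otimes(sr)t^*$, while the right-hand side gives $(\phi\otimes\id)(\, {}_{M_n (C)} \la \xi\cdot\delta \, , \, \zeta \ra)=\phi(\, {}_C \la x\cdot d \, , \, z \ra)\otimes(sr)t^*$. By Lemma \ref{lem:con6} applied to $\phi$, the scalar factors coincide, namely ${}_A \la x\cdot f(\phi)(d) \, , \, z \ra =\phi(\, {}_C \la x\cdot d \, , \, z \ra)$, so the two sides agree. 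By the uniqueness in the $M_n$-analogue of Lemma \ref{lem:con6}, we conclude $f_n (\phi\otimes\id_{M_n (\BC)})=f(\phi)\otimes\id_{M_n (\BC)}$.

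The only real work is the bookkeeping with the tensor-product equivalence-bimodule structure, that is, keeping the left and right actions and the two inner products on $X\otimes M_n (\BC)$ and $Y\otimes M_n (\BC)$ straight. This is routine once the uniqueness characterization of Lemma \ref{lem:con6} is invoked, which is precisely what makes the argument short; I expect no genuine obstacle beyond this verification.
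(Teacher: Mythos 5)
Your proposal is correct and follows essentially the same route as the paper: both arguments invoke the $M_n$-analogue of the uniqueness characterization in Lemma \ref{lem:con6} and verify the defining inner-product identity on elementary tensors $x\otimes m_1$, $d\otimes m_2$, $z\otimes m_3$, reducing to the scalar identity ${}_A \la x\cdot f(\phi)(d)\, ,\, z\ra=\phi(\,{}_C\la x\cdot d\, ,\, z\ra)$. The only cosmetic difference is that you spell out the tensor-product bimodule operations explicitly, which the paper leaves implicit as ``routine computations.''
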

\begin{proof} This lemma can be proved by routine computations. Indeed, for any $d\in D$, $x, z\in X$,
$m_1, m_2, m_3 \in M_n (\BC)$,
\begin{align*}
& {}_{M_n (A)} \la [x\otimes m_1 \cdot f_n (\phi\otimes\id )(d\otimes m_2 )] \, , \, z\otimes m_3 \ra \\
& =(\phi\otimes\id)( \, {}_{M_n (C)} \la (x\otimes m_1 \cdot d\otimes m_2 )\, , \, z\otimes m_3 \ra ) \\
& =\phi( \, {}_C \la x\cdot d \, , \, z \ra )\otimes m_1 m_2 m_3^*  \\
& ={}_{M_n (A)} \la (x\otimes m_1 \cdot f(\phi)(d)\otimes m_2 ) \, , \, z\otimes m_3 \ra
\end{align*}
by Lemma \ref{lem:con6}.
Therefore, $f_n (\phi\otimes\id)=f(\phi)\otimes\id$ for any $\phi\in {}_A \BB_A (C, A)$ and $n\in\BN$.
\end{proof}

\begin{prop}\label{prop:BP5} With the above notation, let $\phi\in {}_A \BB_A (C, A)$. If $\phi$ is
$n$-positive, then $f(\phi)$ is $n$-positive for each $n\in \BN$.
\end{prop}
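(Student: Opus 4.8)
The plan is to reduce the $n$-positivity of $f(\phi)$ to the ordinary positivity statement already proved in Lemma \ref{lem:BP1}(2), but applied to the matrix-amplified equivalence rather than to the original one. First I would recall the definition: $\phi$ is $n$-positive precisely when $\phi\otimes\id_{M_n (\BC)}$ is a positive linear map from $M_n (C)$ to $M_n (A)$, and likewise $f(\phi)$ is $n$-positive precisely when $f(\phi)\otimes\id_{M_n (\BC)}$ is a positive linear map from $M_n (D)$ to $M_n (B)$. So the goal is to show that positivity of $\phi\otimes\id_{M_n (\BC)}$ forces positivity of $f(\phi)\otimes\id_{M_n (\BC)}$.

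Next I would invoke the matrix-level setup established just before Lemma \ref{lem:BP4}: the inclusions $M_n (A)\subset M_n (C)$ and $M_n (B)\subset M_n (D)$ are strongly Morita equivalent with respect to $Y\otimes M_n (\BC)$ and its closed subspace $X\otimes M_n (\BC)$, and $f_n$ is the associated isometric isomorphism of ${}_{M_n (A)} \BB_{M_n (A)} (M_n (C), M_n (A))$ onto ${}_{M_n (B)} \BB_{M_n (B)}(M_n (D), M_n (B))$, built by exactly the same construction as $f$. Because $f_n$ arises from the very same recipe, Lemma \ref{lem:BP1}(2) applies verbatim with $f$ replaced by $f_n$: if $\Phi\in {}_{M_n (A)} \BB_{M_n (A)} (M_n (C), M_n (A))$ is positive, then $f_n (\Phi)$ is positive.

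The proof then closes in one line. Applying this with $\Phi=\phi\otimes\id_{M_n (\BC)}$, which is positive by the hypothesis that $\phi$ is $n$-positive, gives that $f_n (\phi\otimes\id_{M_n (\BC)})$ is positive. By Lemma \ref{lem:BP4} we have $f_n (\phi\otimes\id_{M_n (\BC)})=f(\phi)\otimes\id_{M_n (\BC)}$, so $f(\phi)\otimes\id_{M_n (\BC)}$ is positive, i.e.\ $f(\phi)$ is $n$-positive.

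I do not expect any serious obstacle here: the content is entirely carried by Lemmas \ref{lem:BP1} and \ref{lem:BP4}, and the argument is a formal composition of those two facts. The only point that deserves explicit mention is the legitimacy of transporting Lemma \ref{lem:BP1}(2) from $f$ to $f_n$; this is justified simply by noting that $f_n$ is defined in the same way as $f$, now for the strong Morita equivalence between $M_n (A)\subset M_n (C)$ and $M_n (B)\subset M_n (D)$, so every statement proved for a generic such $f$ holds for $f_n$ as well.
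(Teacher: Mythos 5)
Your argument is correct and is exactly the paper's intended proof: the paper disposes of this proposition by citing Lemmas \ref{lem:BP1} and \ref{lem:BP4}, which is precisely the combination you spell out (apply Lemma \ref{lem:BP1}(2) to $f_n$, then identify $f_n(\phi\otimes\id_{M_n(\BC)})$ with $f(\phi)\otimes\id_{M_n(\BC)}$ via Lemma \ref{lem:BP4}). Your explicit remark on why Lemma \ref{lem:BP1}(2) transports to $f_n$ is a useful clarification of a step the paper leaves implicit.
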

\begin{proof} This lemma is immediate by Lemmas \ref {lem:BP1}, \ref {lem:BP4}.
\end{proof}

\begin{prop}\label{prop} Let $\phi$ be a conditional expectation from $C$ onto $A$.
We suppose that there is a positive number $t$ such that
$$
\phi(c)\geq tc
$$
for any positive element $c\in C$. Then there is a positive number $s$ such that
$$
f(\phi)(d)\geq sd
$$
for any positive element $d\in D$.
\end{prop}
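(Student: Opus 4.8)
The plan is to push everything through the matrix model of Section~\ref{sec:MA}. By Lemma~\ref{lem:Ma3} we have $f(\phi)=\Psi_B^{-1}\circ F(\phi)\circ\Psi_D$, where $F(\phi)=\phi\otimes\id_{M_n(\BC)}$, regarded as a map on $pM_n(C)p$. The maps $\Psi_D\colon D\to pM_n(C)p$ and $\Psi_B\colon B\to pM_n(A)p$ are $*$-isomorphisms, hence order isomorphisms, carrying the inclusion $B\subset D$ onto $pM_n(A)p\subset pM_n(C)p$. Moreover $F(\phi)$ is $M_n(A)$-bimodular, so $F(\phi)(pcp)=p\,F(\phi)(c)\,p\in pM_n(A)p$ and $f(\phi)(d)$ is carried by $\Psi_D$ to $F(\phi)(\Psi_D(d))$. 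Applying the order isomorphism $\Psi_D$ to the desired inequality, the conclusion $f(\phi)(d)\ge sd$ for every positive $d\in D$ becomes \emph{equivalent} to
\[
(\phi\otimes\id_{M_n(\BC)})(c')\ge s\,c'\qquad\text{for every positive }c'\in pM_n(C)p,
\]
where $c'=\Psi_D(d)$.

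Next I would strip off the corner. Since $pM_n(C)p$ is a hereditary $C^*$-subalgebra of $M_n(C)$, positivity there is inherited from $M_n(C)$, and since $(\phi\otimes\id)(pc'p)=p\,(\phi\otimes\id)(c')\,p$, it suffices to find $s>0$ with $(\phi\otimes\id_{M_n(\BC)})(C')\ge s\,C'$ for \emph{every} positive $C'\in M_n(C)$. Thus the whole statement reduces to a single point: the amplified conditional expectation $\phi\otimes\id_{M_n(\BC)}\colon M_n(C)\to M_n(A)$ must satisfy a Pimsner--Popa inequality with a strictly positive constant.

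The hard part is exactly this passage from the scalar bound to the amplified bound, and it is not formal: $\phi(c)\ge tc$ for positive $c\in C$ only says that $\phi-t\,\id_C$ is a \emph{positive} map, whereas $(\phi\otimes\id)(C')\ge tC'$ for all positive $C'$ is the statement that $\phi-t\,\id_C$ is $n$-positive. I would therefore aim to prove that $\phi-t\,\id_C$ is in fact completely positive; granting this, $(\phi-t\,\id_C)\otimes\id_{M_n(\BC)}\ge0$ for every $n$, which yields $(\phi\otimes\id_{M_n(\BC)})(C')\ge tC'$ and hence $s=t$ works. Complete positivity should be extracted from the conditional-expectation structure: writing $\phi-t\,\id_C=(\phi-t_0\,\id_C)+(t_0-t)\id_C$ reduces matters to the critical constant $t_0$ (the excess $(t_0-t)\id_C$ being completely positive), and at the critical constant one argues complete positivity from a Paschke--Stinespring dilation $\phi=V^*\pi(\cdot)V$ with $V$ an isometry intertwining the $A$-action, via a completely positive Radon--Nikodym comparison of $\phi$ with $t\,\id_C$. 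Alternatively, if one knows in the present setting that the bound $\phi(c)\ge tc$ forces $\phi$ to be of index-finite type, a quasi-basis $\{(u_i,u_i^*)\}$ for $\phi$ amplifies to a quasi-basis for $\phi\otimes\id_{M_n(\BC)}$ with $\Ind(\phi\otimes\id_{M_n(\BC)})=\Ind(\phi)\otimes I_n$, and Watatani's Pimsner--Popa inequality then gives $(\phi\otimes\id_{M_n(\BC)})(C')\ge\|\Ind\phi\|^{-1}C'$ directly. Either way, the crux---and the only nonroutine step---is supplying the amplified positivity; once it is in hand, the two reductions above deliver the required $s$.
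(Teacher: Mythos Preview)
Your reduction is exactly the paper's: pass to the matrix model via Lemma~\ref{lem:Ma3}, so that the statement becomes $(\phi\otimes\id_{M_n(\BC)})(c)\ge sc$ for all positive $c\in M_n(C)$, and then transport back through the order-isomorphisms $\Psi_B,\Psi_D$. You also correctly isolate the only nontrivial point, namely that the Pimsner--Popa bound $\phi(c)\ge tc$ must amplify.

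The paper does not argue this step at all; it simply invokes Frank and Kirchberg \cite[Theorem~1]{FK:conditional}, which says precisely that for a conditional expectation $\phi$ on a unital $C^*$-algebra, positivity of $K\phi-\id_C$ for some $K\ge1$ forces $\phi$ to be of finite index and moreover makes $K\phi-\id_C$ \emph{completely} positive. This single citation simultaneously supplies both of your alternatives: it gives the complete positivity you aim for in route~(a), and it gives the index-finiteness you hypothesize in route~(b). Your route~(b), once that hypothesis is granted, is a clean direct argument (amplify the quasi-basis and read off Watatani's Pimsner--Popa inequality with $s=\|\Ind\phi\|^{-1}$); your route~(a) has the correct target statement, but the Stinespring/Radon--Nikodym sketch you offer is too vague to stand on its own and in effect reproves Frank--Kirchberg. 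So: same architecture as the paper, and the gap you flag is real but is closed by a reference rather than by either of your improvised arguments.
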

\begin{proof} We recall the discussions in Section \ref{sec:MA}. That is, by Lemma \ref{lem:MA2},
$f(\phi)=\Psi_B^{-1}\circ F(\phi)\circ\Psi_D$ for any $\phi\in {}_A \BB_A (C, A)$,
where $F$ is the isometric isomorphism of ${}_A \BB_A (C, A)$ onto
${}_{pM_n (A)p} \BB_{pM_n (A)p}(pM_n (C)p, pM_n (A)p)$, $n$ is some positive integer and $p$ is
a full projection in $M_n (A)$. Also, $\Psi_B$ and $\Psi_D$ are the isomorphisms of $B$ and $D$ onto
$pM_n (A)p$ and $pM_n (C)p$ defined in Section \ref{sec:MA}, respectively.
We note that $\Psi_D |_B =\Psi_B$. Since there is a positive integer $t$ such that $\phi(c)\geq tc$
for any positive element $c\in C$, by Frank and Kirchberg \cite [Theorem 1]{FK:conditional},
there is a positive number $s$ such that $(\phi\otimes\id)(c)\geq sc$ for any positive element $c\in M_n (C)$.
Thus for any $d\in D$,
\begin{align*}
f(\phi)(d^* d) & =(\Psi_B^{-1}\circ F(\phi)\circ\Psi_D )(d^* d)
=\Psi_B^{-1}((\phi\otimes\id)(\Psi_D (d)^* \Psi_D (d))) \\
& \geq \Psi_B^{-1}(s \Psi_D (d)^* \Psi_D (d)) =sd^* d
\end{align*}
since $F(\phi)=\phi\otimes\id$ and $\Psi_D |_B =\Psi_B$. Therefore, we obtain the conclusion.
\end{proof}

Following Watatani \cite [Definition 1.11.1]{Watatani:index}, we give the following definition.

\begin{Def}\label{def:BP6} Let $\phi\in{}_A \BB_A (C, A)$. Then a finite set $\{(u_i , v_i )\}_{i=1}^m \subset C\times C$
is called {\sl quasi-basis} for $\phi$ if it satisfies that
$$
c=\sum_{i=1}^m u_i \phi(v_i c)=\sum_{i=1}^m \phi(cu_i )v_i
$$
for any $c\in C$.
\end{Def}

\begin{lemma}\label{lem:BP7} With the above notation, let $\phi\in {}_A \BB_A (C, A)$ with a quasi-basis
$\{(u_i, v_i )\}_{i=1}^m$. Then there is a quasi-basis
$$
\{(p(u_i \otimes I_n )a_j p \, , \, pb_j (v_i \otimes I_n )p)\}_{i=1,2\dots, m, \, j=1,2\dots, K}
$$
for $F(\phi)$, where $a_1 , a_2 ,\dots a_K , b_1 , b_2 , \dots , b_K$ are elements in $M_n (A)$ with
$$
\sum_{j=1}^K a_j pb_j =1_{M_n (A)} .
$$
\end{lemma}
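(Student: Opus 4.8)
The plan is to verify directly that the indicated set satisfies both quasi-basis identities of Definition \ref{def:BP6} for $F(\phi)$. First recall from Section \ref{sec:MA} that $F(\phi)$ is nothing but the restriction of $\phi\otimes\id$ to the corner $pM_n(C)p$, landing in $pM_n(A)p$, and that $\phi\otimes\id$ is an $M_n(A)$-bimodule map from $M_n(C)$ to $M_n(A)$. The existence of elements $a_1,\dots,a_K,b_1,\dots,b_K\in M_n(A)$ with $\sum_{j=1}^K a_j p b_j=1_{M_n(A)}$ is exactly the statement that $p$ is a full projection in the unital $C^*$-algebra $M_n(A)$, so this is available from the outset.

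The key preliminary step is to observe that $\{(u_i\otimes I_n,\,v_i\otimes I_n)\}_{i=1}^m$ is a quasi-basis for $\phi\otimes\id\colon M_n(C)\to M_n(A)$. Indeed, tensoring the two identities $c=\sum_i u_i\phi(v_i c)=\sum_i\phi(cu_i)v_i$ with $\id_{M_n(\BC)}$ yields, for every $w\in M_n(C)$,
$$
w=\sum_{i=1}^m (u_i\otimes I_n)\,(\phi\otimes\id)((v_i\otimes I_n)w)=\sum_{i=1}^m (\phi\otimes\id)(w(u_i\otimes I_n))\,(v_i\otimes I_n),
$$
which is precisely the quasi-basis property for $\phi\otimes\id$ (this is the matrix-amplification stability of quasi-bases).

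With these two ingredients in hand, the verification becomes a bookkeeping computation. Write $U_{ij}=p(u_i\otimes I_n)a_j p$ and $V_{ij}=pb_j(v_i\otimes I_n)p$, and fix $c\in pM_n(C)p$, so that $pc=c=cp$. For the first identity I would compute $\sum_{i,j}U_{ij}\,F(\phi)(V_{ij}c)$ by pulling the factor $pb_j\in M_n(A)$ out of $\phi\otimes\id$ on the left (using the $M_n(A)$-bimodule property), which produces the factor $a_j p b_j$; summing over $j$ collapses this to $1_{M_n(A)}$ via $\sum_j a_j p b_j=1$, leaving $p\sum_i(u_i\otimes I_n)(\phi\otimes\id)((v_i\otimes I_n)(pc))$. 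Applying the amplified quasi-basis identity with $w=pc$ returns $p\cdot pc=c$. The second identity $\sum_{i,j}F(\phi)(cU_{ij})\,V_{ij}=c$ is handled symmetrically: pull $a_j p\in M_n(A)$ out on the right, collapse the $j$-sum against $\sum_j a_j p b_j=1$, apply the amplified quasi-basis identity with $w=cp$, and use $cp\cdot p=c$.

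The only real obstacle is the bookkeeping of the projection $p$: one must make sure that each inserted $pb_j$ or $a_j p$ is genuinely in $M_n(A)$ so that it may be moved through $\phi\otimes\id$, that the $j$-summation is performed at the right moment so that $\sum_j a_j p b_j$ appears exactly in the middle, and that the idempotence $p^2=p$ together with $pc=c=cp$ disposes of the residual factors of $p$. Since each of these steps is mechanical once the amplified quasi-basis identity is established, no genuine difficulty remains.
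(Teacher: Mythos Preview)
Your proposal is correct and follows exactly the route the paper has in mind: the paper's own proof merely records that $F(\phi)=(\phi\otimes\id)|_{pM_n(C)p}$, invokes fullness of $p$ to get $\sum_j a_j p b_j=1$, and then declares the quasi-basis property to hold ``by routine computations'' (citing \cite[Section 2]{KT4:morita}). You have simply written out those routine computations in full, via the amplified quasi-basis for $\phi\otimes\id$ and the $M_n(A)$-bimodule property, so there is no substantive difference in approach.
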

\begin{proof} This lemma can be proved in the same way as in \cite [Section 2]{KT4:morita}. Indeed,
$F(\phi)=(\phi\otimes\id_{M_n (\BC)})|_{pM_n (C)p}$, where $n$ is some positive integer and
$p$ is a full projection in $M_n (A)$. Hence there is elements
$a_1 , a_2 , \dots , a_K, b_1 , b_2 , \dots b_K \in M_n (A)$
such that $\sum_{i=1}^K a_i p b_i =1_{M_n (A)}$. Then the finite set
$$f
\{(p(u_i \otimes I_n )a_j p \, , \, pb_j (v_i \otimes I_n )p)\}_{i=1,2\dots, m, \, j=1,2\dots, K}
$$
is a quasi-basis for $F(\phi)$ by routine computations.
\end{proof}

\begin{prop}\label{prop:BP8} Let $\phi\in {}_A \BB_A (C, A)$. If there is a quasi-basis for $\phi$, then there is a
quasi-basis for $f(\phi)$.
\end{prop}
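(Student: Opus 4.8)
The plan is to transport a quasi-basis through the matrix-algebra picture of Section \ref{sec:MA}. By Lemma \ref{lem:Ma3} we have $f(\phi)=\Psi_B^{-1}\circ F(\phi)\circ\Psi_D$, and by Lemma \ref{lem:BP7} the bimodule map $F(\phi)\in {}_{pM_n (A)p}\BB_{pM_n (A)p}(pM_n (C)p, pM_n (A)p)$ admits a quasi-basis, say $\{(U_k, V_k)\}_{k=1}^N$ with $U_k, V_k\in pM_n (C)p$. Since $\Psi_D: D\to pM_n (C)p$ is a $*$-isomorphism, I would set $\tilde u_k=\Psi_D^{-1}(U_k)$ and $\tilde v_k=\Psi_D^{-1}(V_k)$, both lying in $D$, and claim that $\{(\tilde u_k, \tilde v_k)\}_{k=1}^N$ is a quasi-basis for $f(\phi)$.

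To verify the defining identity, I would fix $d\in D$ and compute $\Psi_D$ of the candidate expression. Using that $\Psi_D$ is multiplicative, that $\Psi_D(\tilde v_k d)=V_k\,\Psi_D(d)$, and that $F(\phi)(\Psi_D(\tilde v_k d))=\Psi_B(f(\phi)(\tilde v_k d))$ (which follows from $f(\phi)=\Psi_B^{-1}\circ F(\phi)\circ\Psi_D$), one obtains
$$
\Psi_D\Bigl(\sum_{k=1}^N \tilde u_k\, f(\phi)(\tilde v_k d)\Bigr)=\sum_{k=1}^N U_k\, F(\phi)(V_k\,\Psi_D(d))=\Psi_D(d),
$$
the last equality being the quasi-basis identity for $F(\phi)$ applied to $\Psi_D(d)\in pM_n (C)p$. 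The crucial point here is the compatibility $\Psi_D|_B=\Psi_B$ recorded in the proof of Proposition \ref{prop}: since $f(\phi)(\tilde v_k d)\in B$, its image under $\Psi_D$ coincides with $\Psi_B(f(\phi)(\tilde v_k d))\in pM_n (A)p$, so the products computed in $pM_n (C)p$ match those computed via $F(\phi)$ and $\Psi_B$. Injectivity of $\Psi_D$ then gives $\sum_k \tilde u_k\, f(\phi)(\tilde v_k d)=d$, and the symmetric identity $\sum_k f(\phi)(d\tilde u_k)\tilde v_k=d$ is handled in exactly the same way.

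The main obstacle I anticipate is purely bookkeeping: one must be careful that the two isomorphisms $\Psi_B$ and $\Psi_D$ are used consistently, since $F(\phi)$ sends $pM_n (C)p$ into the subalgebra $pM_n (A)p$ while the quasi-basis multiplications take place in the larger algebra $pM_n (C)p$. The identity $\Psi_D|_B=\Psi_B$ is precisely what reconciles these two; without it the transported elements would not satisfy the required identity inside $D$. Apart from this the argument is routine, and for an explicit quasi-basis one may simply take $\Psi_D^{-1}$ of the quasi-basis for $F(\phi)$ produced in Lemma \ref{lem:BP7}, namely $\{(\Psi_D^{-1}(p(u_i\otimes I_n)a_j p),\, \Psi_D^{-1}(pb_j(v_i\otimes I_n)p))\}_{i=1,\dots,m,\, j=1,\dots,K}$.
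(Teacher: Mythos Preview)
Your proposal is correct and follows exactly the approach the paper intends: the paper's proof simply reads ``This is immediate by Lemmas \ref{lem:Ma3}, \ref{lem:BP7},'' and you have carefully unpacked the implicit step of transporting the quasi-basis for $F(\phi)$ through $\Psi_D^{-1}$, correctly isolating the compatibility $\Psi_D|_B=\Psi_B$ as the key bookkeeping point.
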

\begin{proof} This is immediate by Lemmas \ref{lem:Ma3}, \ref{lem:BP7}.
\end{proof}

\begin{cor}\label{cor:BP9} If $E^A$ is a conditional expectation from $C$ onto $A$, which is of Watatani index-finite
type, then $f(E^A )$ is a conditional expectation from $D$ onto $B$, which is of Watatani index-finite type.
\end{cor}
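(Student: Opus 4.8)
The plan is to observe that this corollary is an immediate consequence of the two propositions already established, once one unpacks what ``Watatani index-finite type'' means. Recall from Watatani \cite[Definition 1.11.1]{Watatani:index} (our Definition \ref{def:BP6}) that a conditional expectation is of index-finite type precisely when it admits a quasi-basis. Thus the statement splits cleanly into two independent assertions about $f(E^A)$: first, that it is a conditional expectation from $D$ onto $B$; and second, that it possesses a quasi-basis.

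For the first assertion, I would simply invoke Proposition \ref{prop:BP3}. Since $E^A$ is by hypothesis a conditional expectation from $C$ onto $A$, that proposition directly yields that $f(E^A)$ is a conditional expectation from $D$ onto $B$, with no further argument required.

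For the second assertion, I would argue as follows. Because $E^A$ is of Watatani index-finite type, by definition it has a quasi-basis $\{(u_i, v_i)\}_{i=1}^m \subset C \times C$. Proposition \ref{prop:BP8} then guarantees that $f(E^A)$ also admits a quasi-basis. Applying once more the defining equivalence between possessing a quasi-basis and being of index-finite type, we conclude that $f(E^A)$ is of Watatani index-finite type. Combining this with the first assertion gives the corollary.

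I expect no genuine obstacle here, since the real content has already been isolated in Propositions \ref{prop:BP3} and \ref{prop:BP8}. The only point requiring care is to explicitly translate between the terminology ``of Watatani index-finite type'' and the concrete property ``admits a quasi-basis,'' so that the reader sees that Proposition \ref{prop:BP8} is exactly the tool needed; once that translation is made, the proof reduces to citing the two propositions in turn.
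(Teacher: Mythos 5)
Your proposal is correct and follows exactly the paper's own argument: the paper proves this corollary by citing Proposition \ref{prop:BP3} for the conditional expectation part and Proposition \ref{prop:BP8} for the quasi-basis part, which is precisely your decomposition. The only difference is that you spell out the (standard) identification of ``index-finite type'' with ``admits a quasi-basis,'' which the paper leaves implicit.
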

\begin{proof} This is immediate by Propositions \ref{prop:BP3}, \ref{prop:BP8}.
\end{proof}

\section{Modular automorphisms}\label{sec:Mod}
Following Watatani \cite [Section 1.11]{Watatani:index}, we give the definitions of the modular condition and
modular automorphisms.
\par
Let $A\subset C$ be a unital inclusion of unital $C^*$-algebras. Let $\theta$ be an automorphism of $A' \cap C$ and
$\phi$ an element in ${}_A \BB_A (C, A)$.

\begin{Def}\label{def:Mod1} Let $\theta$ and $\phi$ be as above. $\phi$ is said to satisfy the {\sl modular condition}
for $\theta$ if the following condition holds:
$$
\phi(xy)=\phi(y\theta(x))
$$
for any $x\in A' \cap C$, $y\in C$.
\end{Def}
We have the following theorem which was proved by Watatani in \cite {Watatani:index}:

\begin{thm}\label{thm:Mod2} {\rm (cf: \cite [Theorem 1.11.3]{Watatani:index})} Let $\phi\in {}_A \BB_A (C, A)$
and we suppose that there is a quasi-basis for $\phi$. Then there is the unique automorphism $\theta$ of
$A' \cap C$ for which $\phi$ satisfies the modular condition.
\end{thm}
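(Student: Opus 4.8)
The plan is to make the quasi-basis do all the work, by extracting from it two non-degeneracy (faithfulness-type) facts and then forcing every property of $\theta$ through a single uniqueness argument. First I would record the two consequences of the quasi-basis relations $c=\sum_{i=1}^m u_i\phi(v_i c)=\sum_{i=1}^m \phi(cu_i)v_i$ from Definition \ref{def:BP6}: \emph{(Fact A)} if $c\in C$ satisfies $\phi(yc)=0$ for all $y\in C$, then $c=0$ (expand $c=\sum_i u_i\phi(v_i c)$ and take $y=v_i$), and symmetrically \emph{(Fact B)} if $\phi(cy)=0$ for all $y\in C$, then $c=0$ (expand $c=\sum_i \phi(cu_i)v_i$ and take $y=u_i$). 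These are the engine of the whole argument.

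Next I would define, for $x\in A'\cap C$, the element $\theta(x)=\sum_{i=1}^m u_i\,\phi(xv_i)$. Writing $y=\sum_i\phi(yu_i)v_i$ and using that $\phi(yu_i)\in A$ commutes with $x\in A'\cap C$ together with the $A$-bimodule property of $\phi$, one checks $\phi(xy)=\sum_i\phi(yu_i)\phi(xv_i)=\phi(y\theta(x))$, so the modular condition of Definition \ref{def:Mod1} holds; by Fact A, $\theta(x)$ is moreover the \emph{unique} $w\in C$ with $\phi(xy)=\phi(yw)$ for all $y$, which is the viewpoint that makes the rest automatic. I would then verify $\theta(x)\in A'\cap C$ by substituting $ya$ for $y$: the right-module property gives $\phi(ya\theta(x))=\phi(y\theta(x)a)$ for all $y$, so $a\theta(x)=\theta(x)a$ by Fact A. Linearity and $\theta(1)=1$ are immediate from the uniqueness characterization, and multiplicativity follows by applying the modular condition twice, $\phi(x_1x_2y)=\phi(y\,\theta(x_1)\theta(x_2))$, whence $\theta(x_1x_2)=\theta(x_1)\theta(x_2)$ again by Fact A. Thus $\theta$ is a unital algebra endomorphism of $A'\cap C$.

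To upgrade $\theta$ to an automorphism I would build its inverse symmetrically: set $\sigma(x)=\sum_{i=1}^m \phi(u_i x)\,v_i$ and check $\phi(\sigma(x)y)=\phi(yx)$ for all $y$, with $\sigma(x)$ the unique such element by Fact B. The same reasoning shows $\sigma$ is a unital homomorphism of $A'\cap C$. Then $\theta\circ\sigma=\sigma\circ\theta=\id$: from $\phi(y\,\theta(\sigma(x)))=\phi(\sigma(x)y)=\phi(yx)$ for all $y$ one gets $\theta(\sigma(x))=x$ by Fact A, and $\sigma(\theta(x))=x$ follows by Fact B. Hence $\theta$ is a bijective unital homomorphism, i.e.\ an automorphism of $A'\cap C$. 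Uniqueness of the modular automorphism is then painless: if $\theta_1,\theta_2$ both satisfy the modular condition, then $\phi\bigl(y(\theta_1(x)-\theta_2(x))\bigr)=0$ for all $y$, so $\theta_1=\theta_2$ by Fact A.

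The routine verifications are pure bookkeeping with the bimodule property and the two quasi-basis expansions, so the only genuinely non-automatic step is the surjectivity of $\theta$, i.e.\ guessing the correct inverse and matching it to $\theta$. I expect the symmetric definition of $\sigma$ above to be the cleanest route, and the point that deserves care is that $\phi$ is \emph{not} assumed self-adjoint, so $\sigma$ cannot be obtained from $\theta$ by taking adjoints; it must instead be built from the \emph{other} quasi-basis identity. It is precisely the availability of both non-degeneracy facts A and B — one from each form of the quasi-basis relation — that makes the forward map and its inverse both well defined and mutually inverse.
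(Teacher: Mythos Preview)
Your argument is correct. The paper does not give its own proof of this theorem---it is quoted from Watatani \cite[Theorem 1.11.3]{Watatani:index}---and Remark~\ref{remark:Mod4} only records the formula $\theta^{\phi}(c)=\sum_i u_i\phi(cv_i)$, which is exactly your definition of $\theta$. Your derivation of the modular condition, the proof that $\theta(x)\in A'\cap C$, multiplicativity, and uniqueness via Fact~A are all sound, and your construction of the inverse $\sigma(x)=\sum_i \phi(u_i x)v_i$ together with Fact~B is the standard way to obtain bijectivity; in particular your observation that one needs \emph{both} quasi-basis identities (since $\phi$ is not assumed self-adjoint) is exactly the point.
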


\begin{Def}\label{def:Mod3} The above automorphism $\theta$ of $A' \cap C$ given by $\phi$ in
Theorem \ref{thm:Mod2} is called the {\sl modular automorphism} associated with $\phi$ and denoted by
$\theta^{\phi}$.
\end{Def}

\begin{remark}\label{remark:Mod4} Following the proof of \cite [Theorem 1.11.3]{Watatani:index},
we give how to construct the modular automorphism $\theta^{\phi}$. Let $\{(u_i , v_i )\}_{i=1}^m$ be
a quasi-basis for $\phi$. Put
$$
\theta^{\phi}(c)=\sum_{i=1}^m u_i \phi(cv_i )
$$
for any $c\in A' \cap C$. Then $\theta^{\phi}$ is the unique automorphism of $A' \cap C$ satisfying the
modular condition by the proof of \cite [Theorem 1.11.3]{Watatani:index}.
\end{remark}

Let $A\subset C$ and $B\subset D$ be unital inclusions of unital $C^*$-algebras, which are strongly
Morita equivalent. Then by Section \ref{sec:MA},
$$
B\cong pM_n (A)p \, , \quad D\cong pM_n (C)p ,
$$
where $n$ is some positive integer and $p$ is a full projection in $M_n (A)$.
Also, there is the isometric isomorphism
$$
F: {}_A \BB_A (C, A)\longrightarrow {}_{pM_n (A)p} \BB_{pM_n (A)p} (pM_n (C)p \, ,\, pM_n (A)p) ,
$$
which is defined in Section
\ref{sec:MA}. Furthermore, by the proof of \cite [Lemma 10.3]{KT4:morita}, there is the isomorphism $\pi$
of $A' \cap C$ onto $(pM_n (A)p)' \cap pM_n (C)p$ defined by
$$
\pi(c)=(c\otimes I_n )p
$$
for any $c\in A' \cap C$, where we note that $(pM_n (A)p)' \cap pM_n (C)p=(M_n (A)' \cap M_n (C))p$.
and that
$$
M_n (A)' \cap M_n (C)=\{c\otimes I_n \, | \, c\in A' \cap C \}.
$$
Thus we can see that
$$
\pi^{-1}((c\otimes I_n )p)=\sum_{j=1}^K a_j (c\otimes I_n )pb_j =c\otimes I_n
$$
for any $c\in A' \cap C$, where $a_1 , a_2 , \dots , a_K , b_1 , b_2 , \dots , b_K$
are elements in $M_n (A)$ with $\sum_{j=1}^K a_j pb_j =1_{M_n (A)}$ and we identify $M_n (A)' \cap M_n (C)$
with $A' \cap C$ by the isomorphism
$$
A' \cap C \to M_n (A)' \cap M_n (C) : c\to c\otimes I_n .
$$

\begin{lemma}\label{lem:Mod5} With the above notation, let $\phi\in {}_A \BB_A (C, A)$
with a quasi-basis for $\phi$. Then $F(\phi)\in {}_{pM_n (A)p} \BB_{pM_n(A)p}(pM_n (C)p \, , \, pM_n (A)p)$
with a quasi-basis for $F(\phi)$ and
$$
\theta^{F(\phi)}=\pi\circ\theta^{\phi}\circ\pi^{-1} .
$$
\end{lemma}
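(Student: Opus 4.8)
The plan is to combine the quasi-basis produced in Lemma~\ref{lem:BP7} with the uniqueness clause of Theorem~\ref{thm:Mod2}. First, since $\phi$ admits a quasi-basis, Lemma~\ref{lem:BP7} furnishes an explicit quasi-basis for $F(\phi)$; hence $F(\phi)$ falls under the hypotheses of Theorem~\ref{thm:Mod2}, and the modular automorphism $\theta^{F(\phi)}$ of $(pM_n (A)p)' \cap pM_n (C)p$ exists and is the unique automorphism of this relative commutant for which $F(\phi)$ satisfies the modular condition. Consequently it suffices to show that the candidate $\Theta := \pi\circ\theta^{\phi}\circ\pi^{-1}$ itself satisfies the modular condition for $F(\phi)$. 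Note that $\Theta$ is genuinely an automorphism of $(pM_n (A)p)' \cap pM_n (C)p$, being the composite of the isomorphism $\pi^{-1}$ onto $A' \cap C$, the automorphism $\theta^{\phi}$ of $A' \cap C$ (Theorem~\ref{thm:Mod2}), and the isomorphism $\pi$.

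Next I would record the concrete form of $\Theta$. By the description of $\pi$ in this section, every element of $(pM_n (A)p)' \cap pM_n (C)p=(M_n (A)' \cap M_n (C))p$ has the form $X=(c\otimes I_n )p$ with $c\in A' \cap C$, and $\pi^{-1}(X)=c\otimes I_n$, so that $\Theta(X)=\pi(\theta^{\phi}(c))=(\theta^{\phi}(c)\otimes I_n )p$. Two facts will be used throughout: $F(\phi)=(\phi\otimes\id_{M_n (\BC)})|_{pM_n (C)p}$, and $c\otimes I_n$ commutes with $p$, since $c\otimes I_n \in M_n (A)' \cap M_n (C)$ while $p\in M_n (A)$.

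The heart of the argument is the verification that $F(\phi)(XY)=F(\phi)(Y\Theta(X))$ for $X=(c\otimes I_n )p$ and $Y\in pM_n (C)p$. Writing $Y=\sum_{k,l}y_{kl}\otimes e_{kl}$ with $y_{kl}\in C$ and using $pY=Y=Yp$ together with $p(c\otimes I_n )=(c\otimes I_n )p$, I would reduce $F(\phi)(XY)$ to $(\phi\otimes\id)((c\otimes I_n )Y)=\sum_{k,l}\phi(cy_{kl})\otimes e_{kl}$. Applying the modular condition for $\phi$ entrywise, $\phi(cy_{kl})=\phi(y_{kl}\theta^{\phi}(c))$, converts this into $(\phi\otimes\id)(Y(\theta^{\phi}(c)\otimes I_n ))$, and reinserting $p$ via $Yp=Y$ and the commutation of $p$ with $\theta^{\phi}(c)\otimes I_n$ identifies the latter with $F(\phi)(Y\Theta(X))$. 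The modular condition for $F(\phi)$ is thereby established, and the uniqueness in Theorem~\ref{thm:Mod2} gives $\theta^{F(\phi)}=\Theta=\pi\circ\theta^{\phi}\circ\pi^{-1}$.

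The only delicate points are bookkeeping ones: keeping track of the identification $M_n (A)' \cap M_n (C)=\{c\otimes I_n \mid c\in A' \cap C\}$ and of the compression by $p$ that relates the relative commutant of $pM_n (A)p$ in $pM_n (C)p$ to it, and repeatedly moving the projection $p$ past elements of the relative commutant. The entrywise use of the modular condition for $\phi$ is where the hypothesis genuinely enters; I expect the main obstacle to be merely ensuring that every compression by $p$ is handled consistently so that both $XY$ and $Y\Theta(X)$ manifestly lie in $pM_n (C)p$ before $F(\phi)$ is applied.
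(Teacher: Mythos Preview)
Your argument is correct, but it proceeds differently from the paper. The paper uses the explicit quasi-basis $\{(p(u_i\otimes I_n)a_jp,\;pb_j(v_i\otimes I_n)p)\}$ from Lemma~\ref{lem:BP7} and evaluates $\theta^{F(\phi)}((c\otimes I_n)p)$ directly via the construction formula of Remark~\ref{remark:Mod4}, simplifying the resulting double sum to $(\theta^{\phi}(c)\otimes I_n)p$ and matching it against $(\pi\circ\theta^{\phi}\circ\pi^{-1})((c\otimes I_n)p)$. You instead use Lemma~\ref{lem:BP7} only to know that $F(\phi)$ admits some quasi-basis, then verify that the candidate $\Theta=\pi\circ\theta^{\phi}\circ\pi^{-1}$ satisfies the modular condition for $F(\phi)$ by an entrywise application of the modular condition for $\phi$, and conclude by the uniqueness clause of Theorem~\ref{thm:Mod2}. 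Your route avoids handling the explicit double sum and is closer in spirit to the paper's own proof of Theorem~\ref{thm:Mod6}; the paper's route is a direct computation that keeps the explicit quasi-basis visible throughout. Both are short, and the bookkeeping with $p$ and the identification $M_n(A)'\cap M_n(C)=\{c\otimes I_n\mid c\in A'\cap C\}$ that you flag is exactly what the paper manages as well.
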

\begin{proof} Let $\{(u_i , v_i )\}_{i=1}^m$ be a quasi-basis for $\phi$. Then by Lemma \ref{lem:BP7},
$$
\{(p(u_i \otimes I_n )a_j p \, , \, pb_j (v_i \otimes I_n )p)\}_{i=1,2\dots, m, \, j=1,2\dots, K}
$$
is a quasi-basis for $F(\phi)$, where $a_1 , a_2 , \dots , a_K , b_1 , b_2 , \dots , b_K$
are elements in $M_n (A)$
with $\sum_{j=1}^K a_j pb_j =1_{M_n (A)}$.
Then by Remark \ref{remark:Mod4} and the definitions of $F(\phi)$, $\phi$,
for any $c\in A' \cap C$,
\begin{align*}
\theta^{F(\phi)}((c\otimes I_n )p) & =\sum_{i, j}p(u_i \otimes I_n )a_j pF(\phi)((c\otimes I_n )pb_j (v_i \otimes I_n )p) \\
& =\sum_{i, j}p(u_i \otimes I_n )a_j p(\phi\otimes\id)((c\otimes I_n )pb_j (v_i \otimes I_n )p) \\
& =\sum_{i, j}p(u_i \otimes I_n )(\phi\otimes \id)(a_j pb_j (cv_i \otimes I_n ))p \\
& =\sum_i p(u_i \otimes I_n )(\phi\otimes\id)(cv_i \otimes I_n )p \\
& =\sum_i p(u_i \phi(cv_i )\otimes I_n ) \\
& =(\theta^{\phi}(c)\otimes I_n )p .
\end{align*}
On the other hand, for any $c\in A' \cap C$,
$$
(\pi\circ\theta^{\phi}\circ\pi^{-1})((c\otimes I_n )p)=(\pi\circ\theta^{\phi})(c)=(\theta^{\phi}(c)\otimes I_n )p .
$$
Hence
$$
\theta^{F(\phi)}(c)=(\pi\circ\theta^{\phi}\circ\pi^{-1})(c)
$$
for any $c\in(pM_n (A)p)' \cap pM_n (C)p$. Therefore, we obtain the conclusion.
\end{proof}

\begin{thm}\label{thm:Mod6} Let $A\subset C$ and $B\subset D$ be unital inclusions of unital $C^*$-algebras
which are strongly Morita equivalent. Let $\phi$ be any element in ${}_A \BB_A (C, A)$ with a quasi-basis for
$\phi$. Let $f$ be the isometric isomorphism of ${}_A \BB_A (C, A)$ onto ${}_B \BB_B (D, B)$ defined in
Section \ref{sec:con}. Then $f(\phi)$ is an element in ${}_B \BB_B (D, B)$ with a quasi-basis for $f(\phi)$ and
there is an isomorphism $\rho$ of $A' \cap C$ onto $B' \cap D$ such that
$$
\theta^{f(\phi)}=\rho\circ\theta^{\phi}\circ\rho^{-1} .
$$
\end{thm}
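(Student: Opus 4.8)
The plan is to transport the identity of Lemma \ref{lem:Mod5} from the matrix model $pM_n (A)p \subset pM_n (C)p$ back to $B\subset D$ along the isomorphisms $\Psi_B$ and $\Psi_D$. The existence of a quasi-basis for $f(\phi)$ is already supplied by Proposition \ref{prop:BP8}, so the entire task reduces to producing the isomorphism $\rho$ and verifying the intertwining formula. The starting point is Lemma \ref{lem:Ma3}, which expresses $f(\phi)=\Psi_B^{-1}\circ F(\phi)\circ\Psi_D$. Since $\Psi_D|_B =\Psi_B$ (as noted before Proposition \ref{prop}), the isomorphism $\Psi_D$ carries $B$ onto $pM_n (A)p$ and hence restricts to an isomorphism of relative commutants $\Psi_D : B'\cap D \to (pM_n (A)p)'\cap pM_n (C)p$; moreover $\Psi_B^{-1}$ and $\Psi_D^{-1}$ agree on $pM_n (A)p$.

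First I would record the general transformation rule. Let $\{(u_i',v_i')\}$ be the quasi-basis for $F(\phi)$ furnished by Lemma \ref{lem:BP7}. I claim that $\{(\Psi_D^{-1}(u_i'),\Psi_D^{-1}(v_i'))\}$ is a quasi-basis for $f(\phi)$. This is checked by substituting $f(\phi)=\Psi_B^{-1}\circ F(\phi)\circ\Psi_D$ into the two quasi-basis identities of Definition \ref{def:BP6} and using the multiplicativity of $\Psi_D^{-1}$ together with the agreement of $\Psi_B^{-1}$ and $\Psi_D^{-1}$ on the range of $F(\phi)$ (which lies in $pM_n (A)p$). Feeding this quasi-basis into the explicit formula of Remark \ref{remark:Mod4} and collapsing the resulting sum with $\Psi_D^{-1}$ then yields, for any $c\in B'\cap D$,
$$
\theta^{f(\phi)}(c)=\Psi_D^{-1}\bigl(\theta^{F(\phi)}(\Psi_D (c))\bigr) ,
$$
that is, $\theta^{f(\phi)}=\Psi_D^{-1}\circ\theta^{F(\phi)}\circ\Psi_D$ as automorphisms of $B'\cap D$, where $\Psi_D (c)$ lands in $(pM_n (A)p)'\cap pM_n (C)p$ so that $\theta^{F(\phi)}$ applies.

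It then remains to combine this with Lemma \ref{lem:Mod5}, which gives $\theta^{F(\phi)}=\pi\circ\theta^{\phi}\circ\pi^{-1}$, where $\pi : A'\cap C \to (pM_n (A)p)'\cap pM_n (C)p$. Setting $\rho=\Psi_D^{-1}\circ\pi : A'\cap C \to B'\cap D$, which is an isomorphism as a composite of isomorphisms, I obtain
$$
\theta^{f(\phi)}=\Psi_D^{-1}\circ\pi\circ\theta^{\phi}\circ\pi^{-1}\circ\Psi_D=\rho\circ\theta^{\phi}\circ\rho^{-1} ,
$$
which is the desired conclusion. The main obstacle is the transformation rule for the modular automorphism above: one must track carefully that $\Psi_D$ matches the relative commutant $B'\cap D$ with $(pM_n (A)p)'\cap pM_n (C)p$, and that $\Psi_B^{-1}$ may legitimately be replaced by $\Psi_D^{-1}$ when acting on the range of $F(\phi)$. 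Once this bookkeeping is in place, the verification via Remark \ref{remark:Mod4} is routine, and the passage through the matrix model in Lemma \ref{lem:Mod5} handles the nontrivial identification of the commutants.
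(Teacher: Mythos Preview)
Your proposal is correct and follows the same overall strategy as the paper: reduce to the matrix model via Lemma~\ref{lem:Ma3}, invoke Lemma~\ref{lem:Mod5} for $\theta^{F(\phi)}=\pi\circ\theta^{\phi}\circ\pi^{-1}$, and set $\rho=\Psi_D^{-1}\circ\pi$.

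The one tactical difference worth noting is how you establish the transformation rule $\theta^{f(\phi)}=\Psi_D^{-1}\circ\theta^{F(\phi)}\circ\Psi_D$. You transport the quasi-basis of $F(\phi)$ through $\Psi_D^{-1}$ to obtain an explicit quasi-basis for $f(\phi)$, and then plug into the formula of Remark~\ref{remark:Mod4}. The paper instead verifies directly that $f(\phi)$ satisfies the modular condition (Definition~\ref{def:Mod1}) for the automorphism $\Psi_D^{-1}\circ\theta^{F(\phi)}\circ\Psi_D$, using only $f(\phi)=\Psi_B^{-1}\circ F(\phi)\circ\Psi_D$ and multiplicativity of $\Psi_D$, and then appeals to the uniqueness clause of Theorem~\ref{thm:Mod2}. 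The paper's route is marginally cleaner in that it avoids writing down a quasi-basis for $f(\phi)$ at all; your route is more constructive and makes the quasi-basis of Proposition~\ref{prop:BP8} explicit. Both arguments rely on the same bookkeeping you flag, namely that $\Psi_D|_B=\Psi_B$ so that $\Psi_D$ restricts to an isomorphism $B'\cap D\to (pM_n(A)p)'\cap pM_n(C)p$.
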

\begin{proof} By Proposition \ref{prop:BP8}, $f(\phi)\in {}_B \BB_B (D, B)$ with a quasi-basis for $f(\phi)$.
Also, by Lemma \ref{lem:Mod5},
$$
\theta^{F(\phi)}=\pi\circ\theta^{\phi}\circ\pi^{-1} ,
$$
where $\pi$ is the isomorphism of $A' \cap C$ onto $(pM_n (A)p)' \cap pM_n (C)p$ defined as above and $n$ is
some positive integer, $p$ is a full projection in $M_n(A)$. Let $\Psi_D$ be the isomorphism of $D$ onto
$pM_n (C)p$ defined in Section \ref{sec:MA}. Since $\Psi_D |_B$ is an isomorphism of $B$ onto $pM_n (A)p$,
$\Psi_D^{-1}\circ\theta^{F(\phi)}\circ\Psi_D$ can be regarded as an automorphism of $B' \cap D$.
We claim that $f(\phi)$ satisfies the modular condition for $\Psi_D^{-1}\circ\theta^{F(\phi)}\circ\Psi_D$.
Indeed, by Lemma \ref{lem:Ma3}, for any $x\in B' \cap D$, $y\in D$,
\begin{align*}
f(\phi)(xy) & =(\Psi_B^{-1}\circ F(\phi)\circ\Psi_D )(xy) \\
& =(\Psi_B^{-1}\circ F(\phi))(\Psi_D (x)\Psi_D (y)) \\
& =(\Psi_B^{-1}\circ F(\phi))(\Psi_D (y)\theta^{F(\phi)}(\Psi_D (x))) \\
& =(\Psi_B^{-1}\circ F(\phi)\circ\Psi_D )(y(\Psi_D^{-1}\circ\theta^{F(\phi)}\circ\Psi_D )(x)) \\
& =f(\phi)(y(\Psi_D^{-1}\circ\theta^{F(\phi)}\circ\Psi_D )(x)) .
\end{align*}
Hence by Theorem \ref{thm:Mod2}, $\theta^{f(\phi)}=\Psi_D^{-1}\circ\theta^{F(\phi)}\circ\Psi_D$.
Thus, we obtain the conclusion.
\end{proof}

We give a remark. Following \cite [Section 1.4]{Watatani:index}, for any $\phi\in {}_A \BB_A (C, A)$,
$h\in A' \cap C$, we define $\phi_h$ and ${}_h \phi$ as follows: Let $\phi_h$ and ${}_h \phi$ be the linear
maps from $C$ to $A$ defined by
$$
\phi_h (c)=\phi(hc) \, , \quad {}_h \phi (c)=\phi(ch)
$$
for any $c\in C$, respectively. Then by easy computations, $\phi_h$ and ${}_h \phi$ are in ${}_A \BB_A (C, A)$.

\begin{remark}\label{remark:Mod7} Let $A\subset C$ and $B\subset D$ be unital inclusions of unital
$C^*$-algebras, which are strongly Morita equivalent. Let $f$, $F$ and $\pi$, $\Psi_B$, $\Psi_D$
be as in the proof of Theorem \ref{thm:Mod6}.
Then we have the following:
\newline
(1) $F(\phi_h )=F(\phi)_{\pi(h)} \, , \quad f(\phi_h )=f(\phi)_{(\Psi_D^{-1}\circ\pi)(h)}$,
\newline
(2) $F({}_h \phi)={}_{\pi(h)}F(\phi) \, , \quad f({}_h \phi)=\, {}_{(\Psi_D^{-1}\circ\pi)(h)}f(\phi)$,
\newline
for any $\phi\in{}_A \BB_A (C, A)$, $h\in A' \cap C$.
We show (1). For any $c\in pM_n (C)p$,
\begin{align*}
F(\phi_h )(c) & =(\phi_h \otimes\id)(c)
=(\phi\otimes\id)((h\otimes I_n )c)
=(\phi\otimes\id)_{(h\otimes I_n )p}(c) \\
& =F(\phi)_{\pi(h)}(c)
\end{align*}
by the definition of $F$. Hence $F(\phi_h )=F(\phi)_{\pi(h)}$. Also, for any $d\in D$,
\begin{align*}
f(\phi_h )(d) & =(\Psi_B^{-1}\circ F(\phi_h )\circ\Psi_D )(d)=\Psi_B^{-1}(F(\phi_h)(\Psi_D (d))) \\
& =\Psi_B^{-1}(F(\phi)_{\pi(h)}(\Psi_D (d)))=\Psi_B^{-1}(F(\phi)(\pi(h)\Psi_D (d))) \\
& =(\Psi_B^{-1}\circ F(\phi)\circ\Psi_D )((\Psi_D^{-1}\circ\pi)(h)d) \\
& =f(\phi)((\Psi_D^{-1}\circ\pi)(h)d) \\
& =f(\phi)_{(\Psi_D^{-1}\circ\pi)(h)}(d)
\end{align*}
by the above discussion and Lemma \ref{lem:Ma3}, where $n$ is some positive integer and $p$ is
a full projection in $M_n (A)$. Hence $f(\phi_h )=f(\phi)_{(\Psi_D^{-1}\circ\pi)(h)}$.
Similarly, we can show (2).
\end{remark}

\section{Equivalence classes}\label{sec:EC} Let $A\subset C$ and $B\subset D$ be unital inclusions
of unital $C^*$-algebras. We suppose that they are strongly Morita equivalent with respect to
a $C-D$-equivalence bimodule $Y$ and its closed subspace $X$. Let ${}_A \BB_A (C, A)$ and ${}_B \BB_B (D, B)$
be as in Section \ref{sec:con} and let $f_{(X, Y)}$ be the isometric isomorphism of ${}_A \BB_A (C, A)$
onto ${}_B \BB_B (D, B)$ induced by $(X, Y)$, which is
defined in Section \ref{sec:con}. Then $f_{(X, Y)}$ depends on the choice of a
$C-D$-equivalence bimodule $Y$ and its closed subspace $X$. In this section, we shall clarify the relation
between equivalence classes of $C-D$-equivalence bimodules $Y$ and their closed subspaces $X$ and
isometric isomorphisms of ${}_A \BB_A (C, A)$ onto ${}_B \BB_B (D, B)$.
\par
Let $\Equi(A, C, B, D)$ be the set of all pairs $(X, Y)$ such that $Y$ is a $C-D$-equivalence bimodule and
$X$ is its closed subspace satisfying Conditions (1) and (2) in \cite [Definition 2.1]{KT4:morita}.
We define an equivalence relation $``\sim"$ in $\Equi(A, C, B, D)$ as follows:
For any $(X, Y), (Z, W)\in\Equi(A, C, B, D)$, we say that $(X, Y)\sim(Z, W)$ in $\Equi(A, C, B, D)$ if
there is a $C-D$-equivalence bimodule isomorphism $\Phi$ of
$Y$ onto $W$ such that $\Phi|_X$ is a bijection of $X$ onto $Z$. Then $\Phi|_X$ is
an $A-B$-equivalence bimodule isomorphism of $X$ onto $Z$ by \cite [Lemma 3.2]{Kodaka:Picard2}.
We denote by $[X, Y]$ the equivalence class of $(X, Y)\in\Equi(A, C, B, D)$.

\begin{lemma}\label{lem:EC1} With the above notation, let $(X, Y), (Z, W)\in\Equi(A, C, B, D)$ with
$(X, Y)\sim(Z, W)$ in $\Equi(A, C, B, D)$. Then $f_{(X, Y)}=f_{(Z, W)}$.
\end{lemma}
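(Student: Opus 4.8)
The plan is to exploit the uniqueness characterization of the value $f_{(X,Y)}(\phi)$ supplied by Lemma \ref{lem:con6}. Fix $\phi\in {}_A \BB_A (C, A)$ and write $\psi = f_{(X,Y)}(\phi)$ and $\psi' = f_{(Z,W)}(\phi)$; the goal is to show $\psi = \psi'$, and since $\phi$ is arbitrary this yields $f_{(X,Y)} = f_{(Z,W)}$. By Lemma \ref{lem:con6} applied to the pair $(Z,W)$, the map $\psi'$ is the unique linear map from $D$ to $B$ satisfying ${}_A \la x'\cdot\psi'(d)\, , \, z'\ra = \phi(\, {}_C \la x'\cdot d\, , \, z'\ra)$ for all $d\in D$ and $x', z'\in Z$. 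I will verify that $\psi'$ also satisfies the corresponding identity written over $X$, namely ${}_A \la x\cdot\psi'(d)\, , \, z\ra = \phi(\, {}_C \la x\cdot d\, , \, z\ra)$ for all $x, z\in X$ and $d\in D$; then the uniqueness clause of Lemma \ref{lem:con6} for the pair $(X,Y)$ forces $\psi' = \psi$.

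To carry this out, I would substitute $x' = \Phi(x)$ and $z' = \Phi(z)$ with $x, z\in X$ into the defining identity for $\psi'$, which is legitimate because $\Phi|_X$ is a bijection of $X$ onto $Z$. The right-hand side is simplified using the structure of $\Phi$ as a $C-D$-equivalence bimodule isomorphism of $Y$ onto $W$: right $D$-linearity on $Y$ gives $\Phi(x)\cdot d = \Phi(x\cdot d)$, and preservation of the left $C$-valued inner product gives ${}_C \la \Phi(x\cdot d)\, , \, \Phi(z)\ra = {}_C \la x\cdot d\, , \, z\ra$, so the right-hand side collapses to $\phi(\, {}_C \la x\cdot d\, , \, z\ra)$. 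For the left-hand side I would use that $\Phi|_X$ is an $A-B$-equivalence bimodule isomorphism of $X$ onto $Z$: right $B$-linearity gives $\Phi(x)\cdot\psi'(d) = \Phi(x\cdot\psi'(d))$, and preservation of the left $A$-valued inner product gives ${}_A \la \Phi(x\cdot\psi'(d))\, , \, \Phi(z)\ra = {}_A \la x\cdot\psi'(d)\, , \, z\ra$, so the left-hand side collapses to ${}_A \la x\cdot\psi'(d)\, , \, z\ra$. Comparing the two sides yields exactly the identity characterizing $\psi$, completing the argument.

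The only delicate point is the bookkeeping: one must carefully distinguish the left $C$-valued and left $A$-valued inner products (and correspondingly the right $D$- and right $B$-actions), and invoke the correct linearity of $\Phi$ on each side---the $C-D$-bimodule structure of $\Phi$ on $Y$ for the right-hand side, and the $A-B$-bimodule structure of $\Phi|_X$ on $X$ for the left-hand side. Note in particular that $x\cdot d$ lies in $Y$ (not in $X$), so the reduction on the right genuinely uses the inner product and $D$-action on $Y$, whereas $x\cdot\psi'(d)$ lies in $X$, so the reduction on the left uses the inner product and $B$-action on $X$. Once these distinctions are tracked correctly, no estimates or limiting arguments are required, and the conclusion is immediate from uniqueness.
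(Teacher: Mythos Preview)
Your proof is correct and follows essentially the same route as the paper: both arguments substitute $\Phi(x_1),\Phi(x_2)$ into the defining identity from Lemma~\ref{lem:con6} for $f_{(Z,W)}(\phi)$, reduce each side using the $C$--$D$- (resp.\ $A$--$B$-) bimodule isomorphism properties of $\Phi$ (resp.\ $\Phi|_X$), and then invoke the uniqueness clause of Lemma~\ref{lem:con6} over $(X,Y)$ to conclude $f_{(X,Y)}(\phi)=f_{(Z,W)}(\phi)$. Your write-up is in fact slightly more careful about distinguishing the $Y$-level and $X$-level structures than the paper's own proof.
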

\begin{proof} Let $\Phi$ be a $C-D$-equivalence bimodule of $Y$ onto $W$ satisfying that
$\Phi|_X$ is an $A-B$-equivalence bimodule isomorphism of $X$ onto $Z$. Let $\phi\in {}_A \BB_A (C, A)$.
Then for any $x_1 , x_2 \in X$, $d\in D$,
\begin{align*}
{}_A \la \Phi(x_1 )\cdot f(_{(Z, W)}(\phi))(d) \, , \, \Phi(x_2 ) \ra & =\phi(\, {}_C \la \Phi(x_1 )\cdot d \, , \, 
\Phi(x_2 ) \ra) \\
& =\phi(\, {}_C \la \Phi(x_1 \cdot d) \, , \, \Phi(x_2 ) \ra ) \\
& =\phi( \, {}_C \la x_1 \cdot d \, , \, x_2 \ra ) \\
& ={}_A \la x_1 \cdot f_{(X, Y)}(\phi)(d) \, , \, x_2 \ra .
\end{align*}
On the other hand,
\begin{align*}
{}_A \la \Phi(x_1 )\cdot f_{(Z, W)}(\phi)(d) \, , \, \Phi(x_2 ) \ra & ={}_A \la \Phi(x_1 \cdot f_{(Z, W)}(\phi)(d)) \, , \,
\Phi(x_2 ) \ra \\
& ={}_A \la x_2 \cdot f_{(Z, W)}(\phi)(d) \, , \, x_2 \ra .
\end{align*}
Hence we obtain that
$$
{}_A \la x_1 \cdot f_{(X, Y)}(\phi)(d) \, , \, x_2 \ra ={}_A \la x_2 \cdot f_{(Z, W)}(\phi)(d) \, , \, x_2 \ra
$$
for any $x_1, x_2 \in X$. Thus we obtain the conclusion.
\end{proof}

We denote by $f_{[X, Y]}$ the isometric
isomorphism of ${}_A \BB_A (C, A)$ onto ${}_B \BB_B(D, B)$ 
induced by the equivalence class $[X, Y]$ of an element $(X, Y)\in\Equi(A, C, B, D)$.
\par
Let $L\subset M$ be a unital inclusion of unital $C^*$-algebras, which is strongly Morita equivalent
to the unital inclusion $B\subset D$ with respect to a $D-M$-equivalence bimodule $W$ and
its closed subspace $Z$. Then the inclusion $A\subset C$ is strongly Morita equivalent to the inclusion
$L\subset M$ with respect to the $C-M$-equivalence bimodule $Y\otimes_D W$ and its closed subspace
$X\otimes_B Z$.

\begin{thm}\label{thm:EC2} Let $A\subset C$, $B\subset D$ and $L\subset M$ be unital inclusions of unital
$C^*$-algebras. We suppose that $A\subset C$ and $B\subset D$ are strongly Morita with respect to
a $C-D$ equivalence bimodule $Y$ and its closed subspace $X$ and that $B\subset D$ and $L\subset M$
are strongly Morita equivalent with respect to a $D-M$-equivalence bimodule $W$ and its closed subspace
$Z$. Let $f_{[X, Y]}$ and $f_{[Z, W]}$ be the isomorphic isomorphisms of ${}_A \BB_A (C, A)$ and
${}_B \BB (D, B)$ onto ${}_B \BB_B (D, B)$ and ${}_L \BB_L (M, L)$ induced by the equivalence classes
$[X, Y]$ and $[Z, W]$, respectively. Then
$$
f_{[X\otimes_B Z \, , \, Y\otimes_D W]}=f_{[Z, W]}\circ f_{[X, Y]} ,
$$
where $f_{[X\otimes_B Z \, , \, Y\otimes_D W]}$ is the isometric isomorphism of ${}_A \BB_A (C, A)$
onto ${}_L \BB_L (M, L)$ induced by the equivalence class $[X\otimes_B Z \, , \, Y\otimes_D W]$ of
$(X\otimes_B Z \, , \, Y\otimes_D W)\in \Equi(A, C, L, M)$.
\end{thm}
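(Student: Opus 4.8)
The plan is to reduce everything to the uniqueness characterization of Lemma \ref{lem:con6}, now applied to the inclusions $A\subset C$ and $L\subset M$ with the $C$-$M$-equivalence bimodule $Y\otimes_D W$ and its closed subspace $X\otimes_B Z$. Since $f_{[X,Y]}$, $f_{[Z,W]}$ and $f_{[X\otimes_B Z, Y\otimes_D W]}$ are the maps induced by these chosen representatives (legitimate by Lemma \ref{lem:EC1}), each of them satisfies the defining relation of Lemma \ref{lem:con6} for its own pair, and this is all I will use.

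Fix $\phi\in {}_A \BB_A (C, A)$ and set $\psi=f_{[X,Y]}(\phi)\in {}_B \BB_B (D, B)$ and $g=f_{[Z,W]}(\psi)\in {}_L \BB_L (M, L)$, so that $g=(f_{[Z,W]}\circ f_{[X,Y]})(\phi)$ is a linear map from $M$ to $L$. By Lemma \ref{lem:con6} applied to the pair $(X\otimes_B Z, Y\otimes_D W)$, the map $f_{[X\otimes_B Z, Y\otimes_D W]}(\phi)$ is the unique linear map $M\to L$ with
$$
{}_A \la \xi\cdot f_{[X\otimes_B Z, Y\otimes_D W]}(\phi)(m) \, , \, \zeta \ra=\phi(\, {}_C \la \xi\cdot m \, , \, \zeta \ra)
$$
for all $m\in M$, $\xi,\zeta\in X\otimes_B Z$; hence it suffices to show that $g$ satisfies this same relation, after which uniqueness forces $g=f_{[X\otimes_B Z, Y\otimes_D W]}(\phi)$. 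Both sides are bounded and additive in $\xi$ and in $\zeta$, and the elementary tensors span a dense subspace of $X\otimes_B Z$, so I only need to check it for $\xi=x\otimes z$, $\zeta=x'\otimes z'$ with $x,x'\in X$, $z,z'\in Z$.

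For the verification I will use the two defining relations furnished by Lemma \ref{lem:con6},
$$
{}_A \la x\cdot\psi(d) \, , \, x' \ra=\phi(\, {}_C \la x\cdot d \, , \, x' \ra) \, , \qquad {}_B \la z\cdot g(m) \, , \, z' \ra=\psi(\, {}_D \la z\cdot m \, , \, z' \ra),
$$
together with the standard interior-tensor-product formulas: the right action $(x\otimes z)\cdot m=x\otimes(z\cdot m)$, the $A$-valued inner product ${}_A \la x_1\otimes z_1, x_2\otimes z_2 \ra={}_A \la x_1\cdot {}_B \la z_1, z_2 \ra \, , \, x_2 \ra$ on $X\otimes_B Z$, and the analogous $C$-valued inner product ${}_C \la y_1\otimes w_1, y_2\otimes w_2 \ra={}_C \la y_1\cdot {}_D \la w_1, w_2 \ra \, , \, y_2 \ra$ on $Y\otimes_D W$. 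Expanding the left-hand side, $(x\otimes z)\cdot g(m)=x\otimes(z\cdot g(m))$ and the tensor inner product give ${}_A \la x\cdot {}_B \la z\cdot g(m), z' \ra \, , \, x' \ra$; the relation for $g$ rewrites the inner slot as $\psi(\,{}_D \la z\cdot m, z' \ra)$, and the relation for $\psi$ with $d={}_D \la z\cdot m, z' \ra\in D$ then yields $\phi(\,{}_C \la x\cdot {}_D \la z\cdot m, z' \ra \, , \, x' \ra)$. Expanding the right-hand side, $(x\otimes z)\cdot m=x\otimes(z\cdot m)$ and the $C$-valued tensor formula give exactly $\phi(\,{}_C \la x\cdot {}_D \la z\cdot m, z' \ra \, , \, x' \ra)$ as well, so the two sides coincide.

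I expect the only delicate point to be the bookkeeping: keeping track of which inner product is used at each step (the $B$-valued one on $Z$ against the $D$-valued one on $W$, and the passage from $X\otimes_B Z$ into the ambient $Y\otimes_D W$), and recording the correct formulas for the left inner products on the two interior tensor products. Once these are pinned down, the chain of substitutions is forced and the identity $f_{[X\otimes_B Z, Y\otimes_D W]}=f_{[Z,W]}\circ f_{[X,Y]}$ follows from the uniqueness in Lemma \ref{lem:con6}.
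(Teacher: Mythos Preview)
Your argument is correct and follows essentially the same route as the paper: both verify, on elementary tensors $x\otimes z$ and $x'\otimes z'$, that $(f_{[Z,W]}\circ f_{[X,Y]})(\phi)$ satisfies the defining relation of Lemma~\ref{lem:con6} for the pair $(X\otimes_B Z,\,Y\otimes_D W)$, by unwinding the tensor inner-product formulas and applying Lemma~\ref{lem:con6} first for $(Z,W)$ and then for $(X,Y)$, and then invoke uniqueness. Your write-up is in fact slightly more careful than the paper's in making explicit the density reduction to elementary tensors.
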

\begin{proof} Let $x_1 , x_2 \in X$ and $z_1 , z_2 \in Z$. Let $m\in M$ and $\phi\in {}_A \BB_A (C, A)$.
We note that $f_{[Z, W]}(f_{[X, Y]}(\phi))(m)\in L$. Hence
\begin{align*}
& {}_A \la x_1 \otimes z_1 \cdot f_{[Z, W]}(f_{[X, Y]}(\phi))(m) \, , \, x_2 \otimes z_2 \ra \\
& ={}_A \la x_1 \cdot {}_B \la z_1 \cdot f_{[Z, W]}(f_{[X, Y]}(\phi))(m) \, , \, z_2 \ra \, , \, x_2 \ra \\
& ={}_A \la x_1 \cdot f_{[X, Y]}(\phi)({}_D \la z_1 \cdot m \, , \, z_2 \ra ) \, , \, x_2 \ra \\
& =\phi(\, {}_C \la x_1 \cdot {}_D \la z_1 \cdot m \, , \, z_2 \ra \, , \, x_2 \ra .
\end{align*}
On the other hand,
\begin{align*}
& {}_A \la x_1 \otimes z_1 \cdot f_{[X\otimes_B Z \, , \, Y\otimes_D W]}(\phi)(m) \, , \, x_2 \otimes z_2 \ra \\
& =\phi(\, {}_C \la x_1 \otimes z_1 \cdot m \, , \, x_2 \otimes z_2 \ra ) \\
& =\phi(\, {}_C \la x_1 \cdot {}_D \la z_1 \cdot m \, , \, z_2 \ra \, , \, x_2 \ra ) .
\end{align*}
By Lemma \ref{lem:con6},
$$
f_{[Z, W]}(f_{[X, Y]}(\phi))=f_{[X\otimes_B Z \, , \, Y\otimes_D W]}(\phi)
$$
for any $\phi\in {}_A \BB_A (C, A)$. Therefore, we obtain the conclusion.
\end{proof}


\begin{thebibliography}{99}












\bibitem{FK:conditional} M. Frank and E. Kirchberg,
{ \it On  conditional expectations of finite index},
J. Operator Theory,
{\bf 40}
(1998), 87-111.









\bibitem{Kodaka:Picard2}K. Kodaka,
{\it The Picard groups for unital inclusions of unital $C^*$-algebras}, preprint,
arXiv: 1712.09499v1, Acta. Sci. Math. (Szeged), to appear.








\bibitem{KT4:morita}K. Kodaka and T. Teruya,
{\it The strong Morita equivalence for inclusions of $C^*$-algebras and conditional expectations for
equivalence bimodules}, J. Aust. Math. Soc.,
{\bf 105}
(2018), 103--144.








\bibitem{RW:continuous}I. Raeburn and D. P. Williams,
{\it Morita equivalence and continuous -trace $C^*$-algebras},
Mathematical Surveys and Monographs, {\bf 60}, Amer. Math. Soc., 1998.

\bibitem{Rieffel:rotation}M. A. Rieffel,
{\it $C^*$-algebras associated with irrational rotations},
Pacific J. Math.,
{\bf 93}
(1981),
415--429.

\bibitem{Stormer:positive} E. St\o rmer,
{\it Positive linear maps of operator algebras},
Springer-Verlag, Berlin, Heidelberg, 2013.






\bibitem{Watatani:index}Y. Watatani,
{\it Index for $C^*$-subalgebras},
Mem. Amer. Math. Soc.,
{\bf 424}, Amer. Math. Soc.,
1990.


\end{thebibliography}
\end{document}